\def\bkR{{\rm I\kern-.17em R}}
\def\bkC{{\rm ^{_|}\kern-.47em C}}
\newtheorem{thm}{Theorem}[section]
\newtheorem{lem}[thm]{Lemma}
\newtheorem{defn}[thm]{Definition}
\def\@seccntformat#1{\csname the#1\endcsname.\quad}
\def\numberline#1{\hb@xt@\@tempdima{#1\if&#1&\else.\fi\hfil}}
\newcommand{\abs}[1]{\left\vert#1\right\vert}
\begin{document}
\title{
{\bf \Large  Convergence and non-negativity preserving of the solution of  balanced  method for the delay CIR
model with jump} \vspace{1cm} }
\author{A.S. Fatemion Aghda \thanks{ as.fatemion@modares.ac.ir}, \hspace{0.3cm}
Seyed Mohammad Hosseini \thanks{ hossei\_m@modares.ac.ir},
\hspace{0.3cm}
Mahdieh Tahmasebi,
%\thanks{Corresponding author}
 \thanks{ 
 tahmasebi@modares.ac.ir}}
\date{Department of Applied Mathematics, Tarbiat Modares University,\\
P.O. Box 14115-175, Tehran, IRAN.}
\maketitle
\begin{abstract}
In this work, we propose the balanced implicit method (BIM) to approximate the solution of  the delay Cox-Ingersoll-Ross (CIR) model with jump which often gives rise to model an asset price and stochastic volatility dependent on past data.  We show that this method 
  preserves
  non-negativity property of the solution of this model with appropriate control functions. We prove the strong convergence
   and investigate the $p$th moment boundedness of the solution of BIM. Finally, we illustrate those results in the last section.
\end{abstract}

{\bf Subject classification}: { 60H10, 60H35, 65C30.}

$\bf{Keywords }$:  stochastic delay differential equation (SDDE) with jump, the delay CIR model with jump,   balanced method, convergence, non-negativity, moment boundedeness.
%%%%%%%%%%%%%%%%% INTRODUCTION%%%%%%%%%%%%%%%%%%%%%%%%%%%%%%%%
\section{Introduction}
 Consider $(\Omega,\mathcal{F},P)$ as a complete probability
 space with right continuous filtration $\{\mathcal{F}_t\}_{t\geq 0}$  while $\mathcal{F}_0$ contains all $P$-null sets.
We consider the delay CIR model with jump introduced by Jiang, Shen and Wu \cite{ji1} 
\begin{equation}\label{eju1}
\left\{\begin{array}{l}
dS(t)=\lambda (\mu -S(t))dt+\sigma S(t-\tau)^{\gamma}\sqrt{S(t)}dW(t)+\delta S(t)d\tilde{N}(t),~~t\geq 0,\\
S(t)=\xi(t),~~t\in [-\tau,0],
\end{array}\right.
\end{equation}
where $\gamma,$ $\lambda$, $\mu$ and $\sigma$ are
 positive constants, $W(t)$ is a standard Brownian motion and  $\tilde{N}(t)=N(t)-\beta t$ is a compensated Poisson process, in which $N(t)$ is a Poisson process with intensity $\beta$. The positive initial value $\xi$ is an $\mathcal{F}_0$-measurable
  $C([-\tau,0];\mathbb{R}^{+})$-valued random variable satisfying
\begin{equation}\label{xi}
E[\sup _{-\tau\leq t\leq0} \abs{\xi(t)}^p]<+\infty ,
\end{equation}
for any $p>0$. 
In particular, the CIR  model (\ref{eju1}) without jump and delay, $\tau=0,\,\delta=0,$ was introduced by Cox, Ingersoll and Ross \cite{r4}, as a model for stochastic volatility, interest rate and other financial quantities. Also, the CIR  model (\ref{eju1}) without jump, $\delta=0$, was introdued by
Wu, et al. \cite{r1} with regard to the fact that stock prices depend on past behaviors. (See also  \cite{ak1, sl1}). 
Unfortunately, SDDEs with jumps have no explicit solution. Thus, constructing an appropriate numerical method to approximate and study the properties of the true solutions of these models are essential. Furthermore, in recent years, researchers are interested in numerical methods satisfying the same properties of the solutions such as positivity.\\
Strong convergence  for stochastic differential equations (SDEs) with jumps is studied in some literatures  \cite{{ju6},{ju3},{ju5},{ju4},{bu1},{ju7}} and strong convergence for the mean-reverting square root process with jump is discussed in \cite{ju13}. There are also some works concerned with positivity of numerical methods of SDEs; for example, see \cite{{hgw},{r6},{r7},{r10},{ro1},{r2}}.\\
For the CIR model (\ref{eju1}), with $\tau=0, \delta=0$, Dereich et al. \cite{der1} investigated the drift non-negativity preserving of implicit Euler method and in 2013, Higham et al. \cite{r5} introduced a new implicit Milstein scheme which preserves non-negativity of solution. Recently, Yang and Wang \cite{jj1} showed that the backward Euler scheme preserves positivity for the CIR model with jump. \\
In this manuscript, we are interested to balanced implicit scheme as a numerical method in order to obtain the positivity of our approximation process. Non-negativity preserving of BIM for SDEs without jumps is well studied (see; e.g. \cite{r9,r3}), and of SDEs with jumps is discussed in \cite{2017, jj1}, under an appropriate choice of control functions. Also, Tan et al. \cite{tan1} showed that the BIM preserves positivity for the stochastic age-dependent population equation.  \\
Strong convergence for SDDEs with jumps is studied in some literatures \cite{{ju11},{ju2},{ja1},{ju10},{ju9},{li1}} and in \cite{tan} for SDDEs without jumps. Wu et al. \cite{r1}, showed the existence of non-negative solution of the delay CIR model without jump and Jiang, et al. \cite{ji1} proved it for the delay mean-reverting
square root process with jump (\ref{eju1}). Also, they showed the Euler Maruyama method converges strongly to the solution and proved  the boundedness of the $p$th moments of the solution to the model and the method. Fatemioon et al. \cite{tahmasebi} investigated strong convergence of BIM for the CIR model and showed that the scheme preserves positivity. \\
To the best of our knowledge, there is no positivity preserving result of numerical method for SDDEs with jumps. The aim of this paper is to preserve positivity of BIM for SDDEs (the delay CIR models) with jumps (\ref{eju1}). To do this, we can not examine traditional control functions used in BIM for SDEs to reach the positivity of BIM for these SDDEs, for instance, see \cite{hgw, 2017}. We define a new appropriate control function and prove that the non-negative solution of the BIM  converges to the solution of the model (\ref{eju1}) in the strong sense. Also, we show the boundedness of $p$-moments of the method for any $p >0$.\\
The paper is organized as follows. In Section 2, we propose the BIM for the SDDE with jump (\ref{eju1}) and choose the especial control functions that the method preserves non-negativity of the solution of the model. Also,  we introduce the continuous case of the method to prove convergence in  next section. In Section 3,  we prove the convergence of the BIM applying to the model  (\ref{eju1}). Some numerical experiments  in last section illustrate the obtained theoretical results of this paper.
\section{Introduction of BIM and its properties}
In this section, we describe the balanced method to approximate the solution of the  delay CIR model with jump (\ref{eju1}). Then, we state the non-negativity preserving concept of solution of numerical methods for this model, based on definitions in \cite{r3}. Also, we investigate the properties of $p$-moments for  the balanced method in continuous time, which we need in the next section. 
\subsection{BIM and non-negativity preserving of method in discrete case }
 Set a uniform mesh on $[0,T]$,  $t_n=nh$, $n=0,...,N$,
$ N\in \mathbb{N}$ for a step size $h\in (0,1)$ as  $h=\frac{\tau}{m}$, for a positive integer $m.$  We introduce  the BIM  for SDDE with jump  (\ref{eju1}) by $s_n=\xi _n=\xi(t_n)$ for $n=-m,-m+1,...,0$ and for $n\geq 0,$
\begin{equation}\label{eju31}
s_{n+1}=s_{n}+\lambda (\mu -s_n)h+\sigma s_{n-m}^{\gamma} \sqrt{s_{n}}\Delta W_n +\delta s_n \Delta \tilde{N}_n+ C_n (s_{n}-s_{n+1}),
\end{equation}
where $C_n=C_0(s_{n},s_{n-m})h+C_1(s_{n},s_{n-m})\abs{\Delta W_n}+C_2(s_{n},s_{n-m})\abs{\Delta \tilde{N}_n}$, such that for control functions $C_0(s_{n},s_{n-m}),$ $C_1(s_{n},s_{n-m})$ and $C_2(s_{n},s_{n-m})$,  the expression $(1+C_0(s_{n},s_{n-m})h+C_1(s_{n},s_{n-m})$\\ $\abs{\Delta W_n}
+C_2(s_{n},s_{n-m})\abs{\Delta \tilde{N}_n})^{-1}$ always exists and is uniformly bounded.\\
The control functions for the BIM (\ref{eju31}) that ensure preserving non-negativity of the solution of delay CIR model with jump (\ref{eju1}) are
\begin{equation}\label{aju5}
C_0(s_n,s_{n-m})=C_0\geq \lambda,
\end{equation}
\begin{equation}\label{eju30}
C_1(s_n,s_{n-m})=
\left\{\begin{array}{l}
\sigma s_{n-m}^{\gamma}\epsilon ^{ -\frac{1}{2}},~~~~~s_n<\epsilon, \\
\sigma\frac{ s_{n-m}^{\gamma}}{\sqrt{s_n}},~~~~ s_n\geq \epsilon,
\end{array}\right.
\end{equation}
\begin{equation}\label{ajum5}
C_2(s_n,s_{n-m})=C_2\geq \delta,
\end{equation}
where $C_0,\,C_2$ are  positive constants.
\begin{defn}\label{d1}
Let $s_n$ be a numerical solution which is computed by a numerical method for solving SDDE with jump (\ref{eju1}). The numerical solution $s_n$ is said to be eternal life time if
\begin{equation}\label{e26}
P(s_n\geq 0|\xi _n \geq 0)=1,~~~\text{for all}~n\geq -m.
\end{equation}
If (\ref{e26}) does not hold, then the numerical solution is said to be finite life time.
\end{defn}
\begin{defn}\label{d2}
Let $s_n$ be a numerical solution which is computed by a numerical method for solving SDDE with jump (\ref{eju1}). The numerical solution $s_n$ is said to be $\epsilon$-life time if
\begin{equation}\label{e26j}
P(s_{n+1}\geq 0|s_n\geq \epsilon ,~s_{n-m}\geq 0)=1,~~~\text{for some}~\epsilon>0.
\end{equation}
\end{defn}
\begin{thm}\label{l4}
The solution of the BIM  (\ref{eju31}) with control functions (\ref{aju5}), (\ref{eju30}) and (\ref{ajum5})   is $\epsilon$-life time.
\end{thm}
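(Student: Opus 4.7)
The plan is to solve the implicit scheme (\ref{eju31}) explicitly for $s_{n+1}$ and verify that the resulting closed-form expression is nonnegative almost surely on the event $\{s_n\geq\epsilon,\,s_{n-m}\geq 0\}$. Moving the term $C_n s_{n+1}$ in (\ref{eju31}) to the left-hand side yields
\[
(1+C_n)\,s_{n+1}=s_n(1+C_n)+\lambda(\mu-s_n)h+\sigma s_{n-m}^{\gamma}\sqrt{s_n}\,\Delta W_n+\delta s_n\,\Delta\tilde{N}_n.
\]
Since $C_0,C_1,C_2\geq 0$, the factor $1+C_n\geq 1>0$, so the problem reduces to showing that the right-hand side above is nonnegative a.s.

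Next, I would substitute $C_n=C_0 h+C_1|\Delta W_n|+C_2|\Delta\tilde{N}_n|$ on the right-hand side, apply the trivial bounds $\Delta W_n\geq -|\Delta W_n|$ and $\Delta\tilde{N}_n\geq -|\Delta\tilde{N}_n|$, and group the terms by source of randomness. This produces a sum of three contributions: a deterministic piece $s_n+\lambda\mu h+(C_0-\lambda)s_n h$, a Brownian piece $\bigl(s_n C_1-\sigma s_{n-m}^{\gamma}\sqrt{s_n}\bigr)|\Delta W_n|$, and a jump piece $(C_2-\delta)s_n|\Delta\tilde{N}_n|$. Under the assumptions $C_0\geq\lambda$ and $C_2\geq\delta$ from (\ref{aju5}) and (\ref{ajum5}), together with $s_n\geq\epsilon>0$ and $s_{n-m}\geq 0$, both the deterministic and jump contributions are clearly nonnegative.

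The decisive step, which motivates the specific form of (\ref{eju30}), is the Brownian piece. On the event $\{s_n\geq\epsilon\}$ the definition gives $C_1=\sigma s_{n-m}^{\gamma}/\sqrt{s_n}$, so that $s_n C_1=\sigma s_{n-m}^{\gamma}\sqrt{s_n}$ exactly, and the Brownian bracket vanishes identically. Summing the three pieces then yields nonnegativity of the numerator a.s.\ on the conditioning event, and dividing by the positive factor $1+C_n$ gives $s_{n+1}\geq 0$ a.s., which is precisely (\ref{e26j}).

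The main obstacle is conceptual rather than computational: the $\sqrt{s_n}$ appearing in the diffusion forces $C_1$ to carry a $1/\sqrt{s_n}$ factor in order to cancel the signed Brownian contribution, but this factor blows up as $s_n\downarrow 0$. The second branch of (\ref{eju30}) freezes $C_1$ at the constant $\sigma s_{n-m}^{\gamma}\epsilon^{-1/2}$ below the threshold so that the scheme remains well defined and $(1+C_n)^{-1}$ uniformly bounded; it is precisely this cutoff that forces the statement to be in the $\epsilon$-life time rather than eternal-life time form, and it dictates the structure of the argument above.
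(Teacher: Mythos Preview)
Your proposal is correct and follows essentially the same route as the paper: both solve (\ref{eju31}) for $s_{n+1}$, substitute the explicit form of $C_n$ on the event $\{s_n\geq\epsilon,\ s_{n-m}\geq 0\}$, and then verify nonnegativity termwise using $C_0\geq\lambda$, $C_2\geq\delta$, and the exact cancellation $s_nC_1=\sigma s_{n-m}^{\gamma}\sqrt{s_n}$. The only cosmetic difference is that the paper factors $s_{n+1}$ as $s_n$ times a visibly nonnegative fraction plus the separate $\lambda\mu h/(1+C_n)$ term, whereas you bound the signed increments below by $-|\Delta W_n|$, $-|\Delta\tilde N_n|$ and group the numerator into three nonnegative contributions; the content is identical.
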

\begin{proof}
Assume that $s_n\geq\epsilon ,~s_{n-m}\geq 0.$ According to the BIM  (\ref{eju31}) with control functions (\ref{aju5}), (\ref{eju30}) and (\ref{ajum5}), we have
\begin{align}\label{eju32}
s_{n+1}&=s_{n}+\frac{\lambda (\mu - s_n)h+\sigma s_{n-m}^{\gamma}\sqrt{s_{n}}\Delta W_n+\delta s_n\Delta \tilde{N}_n}{1+C_0\,h+\sigma \frac{s_{n-m}^{\gamma}}{\sqrt{s_{n}}}\abs{\Delta W_n}+C_2\abs{\Delta \tilde{N}_n}}\nonumber\\
&=s_{n}\left(\frac{(1+C_0\,h-\lambda \,h)\sqrt{s_{n}}+\sigma s_{n-m}^{\gamma} (\Delta W_n+\abs{\Delta W_n})+(C_2\abs{\Delta \tilde{N}_n}+\delta \Delta \tilde{N}_n){\sqrt{s_{n}}}}{(1+C_0\,h)\sqrt{s_{n}}+\sigma s_{n-m}^{\gamma}\abs{\Delta W_n}+C_2\sqrt{s_n}\abs{\Delta \tilde{N}_n} }\right)\nonumber\\
&~~~~+\frac{\lambda \mu h}{1+C_0\,h+\sigma \frac{s_{n-m}^{\gamma}}{\sqrt{s_{n}}}\abs{\Delta W_n}+C_2\abs{\Delta \tilde{N}_n}}.\nonumber
\end{align}
So, it is clear that $s_{n+1}\geq0.$
\end{proof}
\subsection{BIM and boundedness of the $p$-moments in continuous case}
It is more convenient to use the time-continuous approximation of the BIM (\ref{eju31}) as 
\begin{equation}\label{eju6j}
s(t)=
\left\{\begin{array}{l}
\xi(t),~~~-\tau\leq t\leq 0,\\
\xi(0)+\lambda \int _{0}^{t}\frac{(\mu -\hat{s}(r))}{1+C_r(\hat{s}(r),\hat{s}(r-\tau))}dr+\sigma \int _{0}^{t}\frac{\hat{s}^{\gamma}(r-\tau)\sqrt{\hat{s}(r)}}{1+C_r(\hat{s}(r),\hat{s}(r-\tau))}dW(r)+\delta \int _{0}^{t} \frac{\hat{s}(r)}{1+C_r(\hat{s}(r),\hat{s}(r-\tau))}d\tilde{N}(r) ,~~t\geq 0,
\end{array}\right.
\end{equation}
where
\begin{equation}\label{e5}
\hat s(t)=
\left\{\begin{array}{l}
\xi(t),~~~-\tau\leq t\leq 0,\\
\sum _{n=0}^{[\frac{T}{h}]}s_{n} 1_{[n h, (n+1) h)}(t), ~~~t\geq 0,
\end{array}\right.
\end{equation}
with $[\frac{T}{h}]$ as the integer part of $\frac{T}{h}$, $C_r(\hat{s}(r),\hat{s}(r-\tau))=C_0(\hat{s}(r),\hat{s}(r-\tau))h+C_1(\hat{s}(r),\hat{s}(r-\tau))
\abs{\Delta{W}(r)}+C_2(\hat{s}(r),\hat{s}(r-\tau))\abs{\Delta{\tilde{N}}(r)}$, in which $C_0,C_1,C_2$ defined  in (\ref{aju5}),  (\ref{eju30}) and (\ref{ajum5}) and $\Delta{W}(r)=W(t_
{k+1})-W(t_k)$ and $\Delta{\tilde{N}}(r)=\tilde{N}(t_
{k+1})-\tilde{N}(t_k)$ for $r\in [t_k,t_{k+1})$.
For simplicity of notation, we set $C_r:=C_r(\hat{s}(r),\hat{s}(r-\tau))$.\\
It is easy to observe that  $s(nh)=s_{n},$  so an error bound for $s(t)$  will automatically
imply an error bound for $s_n$. Also, it is  easy to obtain the
following natural relationship
\begin{equation}\label{e13}
\sup _{0 \leq t \leq T}\abs{\hat{s}(t)}\leq \sup _{0 \leq t \leq T}\abs{{s}(t)}.
\end{equation}
%The following theorem shows that the $p$th moments of the solution of model (\ref{eju1}) are bounded.
%\begin{thm}
%Let $S(t)$ be the solution of the delay CIR model (\ref{eju1}), then there exists a constant $K$, such that
%\begin{equation}\label{ju6}
%E(\sup _{-\tau \leq t \leq T} \abs {S(t)}^p)\leq K,
%\end{equation}
%holds for $p>2$, and
%\begin{equation}
%E\abs{S(t)}^p\leq E[\abs{S(t)}^3]^{\frac{p}{3}}\leq K^{\frac{p}{3}},
%\end{equation}\label{ju7}
%holds for  $0<p\leq 2$.
%\end{thm}
%\begin{proof}
%In  the case of $p>2$ see Lemma 2.1 in \cite{ji1}, and in the case of $0<p \leq 2,$ apply the H$\ddot{\text{o}}$lder inequality.
%\end{proof}
Now, we study the $p$th moment properties of the balanced method.
\begin{thm}\label{ljuj1}
There exists a constant $K_1$, which is independent of $h$, such that
\begin{equation}\label{e7}
E(\sup _{-\tau \leq t \leq T} \abs {s(t)}^p)\leq K_1,
\end{equation}
holds for $p>2,$ and
\begin{equation}
E\abs{s(t)}^p\leq E[\abs{s(t)}^3]^{\frac{p}{3}}\leq K_1^{\frac{p}{3}},
\end{equation}
holds for  $0<p\leq 2$.
\end{thm}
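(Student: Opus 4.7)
The second inequality is an immediate consequence of Jensen's inequality: for $0<p\le 2$ the map $x\mapsto x^{p/3}$ is concave on $[0,\infty)$, so once the bound with exponent $3$ is known,
$$E|s(t)|^p = E\bigl[(|s(t)|^3)^{p/3}\bigr]\leq \bigl(E|s(t)|^3\bigr)^{p/3}\leq K_1^{p/3}.$$
So the real work is the supremum bound for $p>2$. Here is the plan.

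First I would apply It\^o's formula for jump-diffusion processes to $|s(t)|^p$ (legitimate because $s(t)\ge 0$ by Theorem \ref{l4}, so $|s(t)|^p=s(t)^p$ is smooth where it matters). Writing the continuous BIM \eqref{eju6j} as $ds(t)=a(t)\,dt+b(t)\,dW(t)+c(t)\,d\tilde N(t)$ with
$a(t)=\lambda(\mu-\hat s(t))/(1+C_t)$, $b(t)=\sigma\hat s^\gamma(t-\tau)\sqrt{\hat s(t)}/(1+C_t)$, $c(t)=\delta\hat s(t)/(1+C_t)$, I would get
\begin{align*}
s(t)^p &= \xi(0)^p + p\int_0^t s(r)^{p-1}a(r)\,dr + \tfrac{p(p-1)}{2}\int_0^t s(r)^{p-2}b(r)^2\,dr\\
&\quad + p\int_0^t s(r)^{p-1}b(r)\,dW(r) + \int_0^t\!\!\bigl[(s(r-)+c(r))^p-s(r-)^p\bigr]d\tilde N(r)\\
&\quad + \beta\int_0^t\!\!\bigl[(s(r-)+c(r))^p-s(r-)^p-p\,c(r)s(r-)^{p-1}\bigr]dr.
\end{align*}

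Second, I would take $\sup_{0\le t\le T'}$ on both sides (for any $T'\le T$), then expectation, and bound the martingale terms with Burkholder--Davis--Gundy. The key structural inputs are (i) $1+C_r\ge 1$, so $|a(r)|\le \lambda(\mu+\hat s(r))$ and $|c(r)|\le \delta\hat s(r)$; (ii) the diffusion ratio needs the case split from \eqref{eju30}: on $\{\hat s(r)\ge\epsilon\}$ I bound $b(r)^2\le \sigma^2\hat s^{2\gamma}(r-\tau)\hat s(r)$, while on $\{\hat s(r)<\epsilon\}$ I simply bound $b(r)^2\le \sigma^2\hat s^{2\gamma}(r-\tau)\epsilon$; (iii) for the jump increment, by the mean value theorem $(s+c)^p-s^p-pcs^{p-1}\le K_p(c^2 s^{p-2}+c^p)$, which after substituting $|c(r)|\le \delta\hat s(r)$ is controlled by $\hat s(r)^p$. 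Combining with Young's inequality to absorb mixed powers such as $\hat s^{2\gamma}(r-\tau)\hat s(r)^{p-1}$, everything reduces to a bound of the form
$$E\sup_{0\le t\le T'}s(t)^p\le K + K\int_0^{T'} E\sup_{0\le u\le r}s(u)^p\,dr + K\int_0^{T'} E\sup_{-\tau\le u\le r}\hat s(u)^{q}\,dr,$$
where $q$ is a fixed power depending on $p,\gamma$, and the constants absorb $\mu,\lambda,\sigma,\delta,\beta,\epsilon,T$.

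Third, to close the loop I would use relation \eqref{e13} ($\sup_{[0,T]}\hat s\le\sup_{[0,T]} s$) together with the standing hypothesis \eqref{xi} on the initial segment, and then handle the delay by the standard step-by-step argument: work on $[0,\tau]$ first, where $\hat s(r-\tau)=\xi(r-\tau)$ is deterministically bounded in every $L^p$ by \eqref{xi}; apply Gronwall to get a bound on $[0,\tau]$; then repeat on $[\tau,2\tau]$ using the bound just obtained as input for the delay term; and iterate finitely many times to cover $[0,T]$. The resulting constant is independent of $h$ because the estimates used only $1+C_r\ge 1$ and the parameters, never the step size.

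The main obstacle I foresee is the diffusion term involving $\sqrt{\hat s(r)}$ together with the case split in the control function $C_1$. One has to make sure that on the regime $\{\hat s(r)<\epsilon\}$ the contribution does not blow up when taking higher moments, and on $\{\hat s(r)\ge\epsilon\}$ that the mixed delay/pointwise product $\hat s^{2\gamma}(r-\tau)\hat s(r)$ can be split by Young's inequality into pure $p$-th powers to match the Gronwall template; the same care is needed for the jump integrand when $p$ is large, since the binomial expansion produces cross terms $c(r)^{k}s(r)^{p-k}$ that must all be dominated by $\hat s(r)^p$ or by deterministic constants.
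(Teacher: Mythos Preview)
Your route is different from the paper's. Instead of applying It\^o's formula to $s(t)^p$, the paper works directly with the integral representation \eqref{eju6j}: it localizes via the stopping time $\tau_k=T\wedge\inf\{t\ge 0:|s(t)|>k\}$, then uses the elementary inequality $|a+b+c+d|^p\le 4^{p-1}(|a|^p+\cdots)$ together with H\"older on the drift integral and Burkholder--Davis--Gundy on the two stochastic integrals, all combined with the single structural fact $\tfrac{1}{1+C_r}\le 1$. This lands them exactly on the same inequality as in Lemma~3.1 of \cite{ji1} for the Euler scheme, and they simply quote that reference for the remaining Gronwall/step-by-step argument. Your It\^o-formula plan can reach the same Gronwall inequality, but it is a longer road: you generate drift, quadratic-variation and jump-compensator terms that must each be re-absorbed, whereas the paper's approach never differentiates and keeps the four integrals separate from the start.

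Two caveats on your version. First, your justification ``$s(t)\ge 0$ by Theorem~\ref{l4}'' overreaches: that theorem only establishes $\epsilon$-life time, i.e.\ $P(s_{n+1}\ge 0\mid s_n\ge\epsilon,\,s_{n-m}\ge 0)=1$, not eternal non-negativity of the continuous interpolant. Since $x\mapsto x^p$ is not $C^2$ at $0$ for non-integer $p$, you would need a localization argument (stopping before $s$ leaves $(1/k,k)$, as the paper in fact does) or a regularization $(s+\varepsilon)^p$ before It\^o's formula is legitimate. Second, your case split on $\{\hat s(r)\ge\epsilon\}$ versus $\{\hat s(r)<\epsilon\}$ for the diffusion coefficient is unnecessary: since $\tfrac{1}{1+C_r}\le 1$ regardless of which branch of $C_1$ is active, one has $b(r)^2\le \sigma^2\hat s^{2\gamma}(r-\tau)\hat s(r)$ uniformly, and the specific form of $C_1$ never enters the moment bound.
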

\begin{proof}
Define the stopping time, for any $k>0,$
\begin{equation*}
\tau _k=T\wedge \inf \left\{t\geq 0, \abs{s(t)}>k\right\},
\end{equation*}
We set $\inf{\emptyset}=\infty$, where $\emptyset$ denotes the empty set. For any $t_1\in[0,T]$,  from the H$\ddot{\text{o}}$lder
  inequality and the Burkholder-Davis-Gundy inequality \cite{r16} and applying
 the fact   $\frac{1}{1+C_r}\leq 1$ and the relation (\ref{eju6j}), we result that there exist positive constants $C_p$ and $C_{p,\beta}$ such that
\begin{align}\label{ej9}
E(\sup _{0 \leq t \leq t_1}\abs{s(t\wedge \tau _k)}^p)& \leq 4^{p-1}\left[E\abs{\xi(0)}^p+\lambda ^pT^{p-1}E\int _{0}^{t_1\wedge \tau _k}\abs{\frac{(\mu -\hat{s}(r))}{1+C_r}}^pdr\right.\nonumber\\
& ~~+\sigma ^p E\left[\sup _{0\leq t\leq t_1}\abs{\int _{0}^{t\wedge \tau _k}\frac{\hat{s}(r-\tau)^{\gamma}\sqrt{\hat{s}(r)}}{1+C_r}dW(r)}^p\right]\nonumber\\
& ~~\left. + \delta ^pE\left[\sup _{0\leq t\leq t_1}\abs{\int _{0}^{t\wedge \tau _k}\frac{\hat{s}(r)}{1+C_r}d\tilde{N}(r)}^p\right]\right]\nonumber\\
&\leq 4^{p-1}\left[E\abs{\xi(0)}^p+\lambda ^pT^{p-1}E\int _{0}^{t_1\wedge \tau _k}\abs{\frac{(\mu -\hat{s}(r))}{1+C_r}}^pdr\right. \nonumber\\
& ~~+\sigma ^p C_p E\abs{\int _{0}^{t_1\wedge \tau _k}\frac{\hat{s}(r-\tau)^{2\gamma}\hat{s}(r)}{\left(1+C_r\right)^2}dr}^{\frac{p}{2}}\nonumber\\
&~~\left. +\delta ^pC_{p,\beta} E\abs{\int _{0}^{t_1\wedge \tau _k}\frac{\hat{s}(r)^p}{\left(1+C_r\right)^p}dr}\right]\nonumber\\
&\leq 4^{p-1}\left[E\abs{\xi(0)}^p+\lambda ^pT^{p-1}E\int _{0}^{t_1\wedge \tau _k}\abs{\mu -\hat{s}(r)}^pdr\right.\nonumber\\
&~~+\sigma ^p C_p E\abs{\int _{0}^{t_1\wedge \tau _k}\abs{\hat{s}(r-\tau)^{2\gamma}}\abs{\hat{s}(r)}dr}^{\frac{p}{2}}\nonumber\\
&~~\left.+\delta ^pC_{p,\beta} E\int _{0}^{t_1\wedge \tau _k}\abs{\hat{s}(r)}^pdr\right].
\end{align}
 Then, following the proof of  Lemma 3.1 in \cite{ji1}, 
 the proof of the stated result for $p >2$ is completed.\\
 For the case $0<p\leq 2$, the stated result follows directly from the H$\ddot{\text{o}}$lder inequality.
\end{proof}
So, Theorem \ref{ljuj1}, showed that  the $p$th moment  of the numerical solution of the balanced method (\ref{eju31}), is bounded  for any $p>0$.\\
Jiang et. al. \cite{ji1}, showed that Equation (\ref{eju1}) is mean reversion as $t\to \infty.$ Also, they proved that the Euler- 
Maruyama method keeps this property. In the following theorem we show that 
 $\mu$ is also an upper bound for the mean of solution of the BIM (\ref{eju31}), with step size $h<\frac{2}{\lambda}$, when $n\to \infty$.
\begin{thm}\label{ls1}
 For the BIM (\ref{eju31}) with control functions  (\ref{aju5}),  (\ref{eju30}) and (\ref{ajum5}), we have
\begin{equation}\label{s2}
E(s_{n+1})\leq (1-\lambda h)^n(E(\xi(0)) -\mu))+\mu +o(h^{\frac{1}{2}}),
\end{equation}
and hence for $h<\frac{2}{\lambda},$ we have $E(s_n)\leq \mu$ as $n\to\infty$.
\end{thm}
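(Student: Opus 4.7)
The plan is to take expectation on both sides of the rearranged BIM, exploit the mean-zero property and symmetry of the Brownian and compensated Poisson increments to isolate the linear mean-reversion drift $\lambda(\mu-E[s_n])h$, and then show that the remaining contribution from the jump has a non-negative leading-order part which, entering the recursion with a minus sign, can be dropped to furnish the claimed upper bound.

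First I would rewrite (\ref{eju31}) in the rational form
\[
s_{n+1}=s_n+\frac{A_n+B_n+D_n}{1+C_n},\quad A_n=\lambda(\mu-s_n)h,\ B_n=\sigma s_{n-m}^{\gamma}\sqrt{s_n}\,\Delta W_n,\ D_n=\delta s_n\Delta\tilde N_n,
\]
and then, using $\tfrac{1}{1+C_n}=1-\tfrac{C_n}{1+C_n}$ together with $E[\Delta W_n]=E[\Delta\tilde N_n]=0$, deduce
\[
E[s_{n+1}]-E[s_n]=\lambda(\mu-E[s_n])h-E\!\left[\frac{(A_n+B_n+D_n)C_n}{1+C_n}\right].
\]
The Brownian piece $E[B_nC_n/(1+C_n)]$ vanishes identically by symmetry: conditional on $\mathcal{F}_{t_n}$, $\Delta W_n$ is a symmetric Gaussian, $\Delta\tilde N_n$ is independent of it, and $C_n$ depends on $\Delta W_n$ only through $|\Delta W_n|$, so the integrand is odd in $\Delta W_n$. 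The drift piece is handled by Cauchy--Schwarz: $|E[A_nC_n/(1+C_n)]|\le(E[A_n^2]\,E[C_n^2])^{1/2}=O(h^{3/2})$, since $E[C_n^2]=O(h)$ (from $E|\Delta W_n|^2=h$, $E|\Delta\tilde N_n|^2=\beta h$) and the moment bound in Theorem~\ref{ljuj1} controls $E[A_n^2]$.

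The heart of the argument is the jump piece $E[D_nC_n/(1+C_n)]$. Expanding $C_n=C_0h+C_1(s_n,s_{n-m})|\Delta W_n|+C_2|\Delta\tilde N_n|$, every cross-term involving $\Delta W_n$ drops by independence combined with $E[\Delta\tilde N_n]=0$, the $C_0h$ contribution is $O(h^{3/2})$ by Cauchy--Schwarz, and what remains at leading order is $\delta C_2 E[s_n]\cdot E[\Delta\tilde N_n|\Delta\tilde N_n|]$; a direct Poisson computation gives $E[\Delta\tilde N_n|\Delta\tilde N_n|]=\beta h+O(h^2)$, which is \emph{non-negative} for small $h$. The quadratic remainder $E[D_nC_n^2/(1+C_n)]$ is $O(h^{3/2})$ again by Cauchy--Schwarz with $E[C_n^4]=O(h^2)$ and Theorem~\ref{ljuj1}. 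Since this non-negative leading-order correction is subtracted in the recursion above, it can be dropped to produce the one-step inequality
\[
E[s_{n+1}]\le(1-\lambda h)E[s_n]+\lambda h\mu+O(h^{3/2}).
\]

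Iterating this linear recursion by induction yields
\[
E[s_{n+1}]\le(1-\lambda h)^{n+1}(E[\xi(0)]-\mu)+\mu+\sum_{k=0}^{n}(1-\lambda h)^{n-k}\,O(h^{3/2}),
\]
and for $h<2/\lambda$ the factor $|1-\lambda h|<1$ bounds the geometric sum by $1/(\lambda h)$, which produces the $o(h^{1/2})$ correction in (\ref{s2}); sending $n\to\infty$ with $h$ fixed collapses $(1-\lambda h)^{n+1}\to 0$ and delivers $\limsup_{n\to\infty}E[s_n]\le\mu$ up to the small $h$-dependent error. The main obstacle I anticipate is the jump-term bookkeeping---specifically, pinning down the sign of $E[\Delta\tilde N_n|\Delta\tilde N_n|]$ from the Poisson law and controlling the quadratic remainder at order $h^{3/2}$ despite $|\Delta\tilde N_n|$ appearing in both numerator and denominator.
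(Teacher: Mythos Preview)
Your strategy is more refined than the paper's. The paper does not exploit symmetry or sign at all: it simply applies Cauchy--Schwarz in the crude form $E[s_{n-m}^\gamma\sqrt{s_n}\,\Delta W_n/(1+C_n)]\le (E[s_{n-m}^{2\gamma}s_n])^{1/2}(E[\Delta W_n^2])^{1/2}=O(h^{1/2})$ and likewise for the jump term, and handles the drift via $-E[s_n/(1+C_n)]\le -E[s_n]+E[s_nC_n]$, arriving at the one-step recursion with an $O(h^{1/2})$ remainder directly. Your symmetry argument giving $E[B_nC_n/(1+C_n)]=0$ and your $O(h^{3/2})$ bound for the $A_n$ piece are both correct and genuinely sharper than what the paper does.

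The gap is exactly where you anticipate it. Your claim $E[C_n^4]=O(h^2)$ is false: unlike the Brownian increment, the compensated Poisson increment satisfies $E[(\Delta\tilde N_n)^4]=\beta h+3(\beta h)^2=O(h)$, so the $C_2^4|\Delta\tilde N_n|^4$ contribution forces $E[C_n^4]=O(h)$. Cauchy--Schwarz then yields only $\bigl|E[D_nC_n^2/(1+C_n)]\bigr|\le (E[D_n^2]\,E[C_n^4])^{1/2}=O(h)$, not $O(h^{3/2})$, and your recursion does not close at the stated order. The repair is to argue by sign rather than absolute value: on $\{\Delta N_n\ge 1\}$ (with $\Delta N_n=N(t_{n+1})-N(t_n)$) one has $\Delta\tilde N_n>0$ for $h<1/\beta$, hence $D_nC_n/(1+C_n)\ge 0$ and this part may be discarded in the upper bound; on $\{\Delta N_n=0\}$ one has $|\Delta\tilde N_n|=\beta h$, whence $|D_nC_n/(1+C_n)|\le \delta s_n\beta h\cdot C_n$ with $E[C_n\mid\Delta N_n=0]=O(h^{1/2})$, giving an $O(h^{3/2})$ contribution. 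This recovers $-E[D_nC_n/(1+C_n)]\le O(h^{3/2})$ and your recursion then goes through.
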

\begin{proof}
Taking expectation from the both sides of  the BIM (\ref{eju31}), and using $\frac{1}{1+C_n}\leq 1$,
   one  can derive that
\begin{align}\label{fi1}
E(s_{n+1})&=E(s_n)+E(\frac{\lambda \mu h}{1+C_n})-E(\frac{\lambda s_n\,h}{1+C_n})+\sigma E(\frac{s_{n-m}^\gamma \sqrt{s_n}\Delta W_n}{1+C_n})+\delta E(\frac{s_n \Delta \tilde {N}_n }{1+C_n})\nonumber\\
&\leq E(s_n)+\lambda \mu h-\lambda hE(\frac{ s_n}{1+C_n})+\sigma E(\frac{s_{n-m}^\gamma \sqrt{s_n}\Delta W_n}{1+C_n})+\delta E(\frac{s_n \Delta \tilde {N}_n }{1+C_n}).
\end{align}
 We then have
\begin{align}\label{fjui2}
-E(\frac{ s_n}{1+C_n})&=-E(s_n)+E(\frac{ s_nC_n}{1+C_n})\leq -E(s_n)+E(s_nC_n)\nonumber\\
&=-E(s_n)+C_0\,hE(s_n)+\sigma E(\sqrt{s_n}s_{n-m}^{\gamma}\abs{\Delta W_n}1_{s_n>\epsilon})\nonumber\\
&~~~+\sigma \epsilon ^{-\frac{1}{2}} E(s_ns_{n-m}^{\gamma}\abs{\Delta W_n}1_{s_n<\epsilon})+C_2\, E(s_n\abs{\Delta \tilde{N}_n}) .
\end{align}
For every $\gamma >0,$ from Theorem \ref{ljuj1} and the H$\ddot{\text{o}}$lder inequality,
there exists a constant $U_1>0$  such that
\begin{align}\label{fi4}
E(\sqrt{s_n}s_{n-m}^{\gamma})\leq E(s_n^3) ^{\frac{1}{6}}E(s_{n-m}^{\frac{6\gamma}{5}})
^{\frac{5}{6}}\leq U_1,
\end{align}
\begin{align}\label{fiju4}
E(s_ns_{n-m}^{\gamma})\leq E(s_n^6)^{\frac{1}{6}}E(s_{n-m}^{\frac{6\gamma}{5}})
^{\frac{5}{6}}\leq U_1,
\end{align}
\begin{align}\label{ju3}
E(s_n\abs{\Delta \tilde{N}_n})\leq E(s_n^2)^{\frac{1}{2}}E(\abs{\Delta \tilde{N}_n}^2)^{\frac{1}{2}}\leq U_1\,\sqrt{\beta h}.
\end{align}
We know that $E(\abs{\Delta W_n})=\sqrt{\frac{2h}{\pi}},$ also $s_n$ and 
$s_{n-m}$ are $\mathcal{F}_{t_n}-$measurable, so with substituting  inequalities
 (\ref{fi4}), (\ref{fiju4}) and (\ref{ju3}) in (\ref{fjui2}), we obtain
\begin{align}\label{fjui5}
-E(\frac{ s_n}{1+C_n})&\leq -E(s_n)+C_0\,hE(s_n)+\sigma \,\sqrt{\frac{2h}{\pi}} E(\sqrt{s_n}s_{n-m}^{\gamma}1_{s_n>\epsilon})\nonumber\\
&~~~+\sigma  \sqrt{\frac{2h}{\pi}}\epsilon ^{-\frac{1}{2}} E(s_ns_{n-m}^{\gamma}1_{s_n<\epsilon})+C_2 U_1\sqrt{\beta h}\nonumber\\
&\leq -E(s_n)+C_0\,hE(s_n)+U_1\, \sigma \,\sqrt{\frac{2h}{\pi}}(1+\epsilon ^{-\frac{1}{2}})+C_2 U_1\sqrt{\beta h}.
\end{align}
From the H$\ddot{\text{o}}$lder inequality and  $(\frac{1}{1+C_n})^2\leq 1$ and similar to the inequality  (\ref{fi4}), there exists a constant $U_2$  such that
\begin{align}\label{ju4}
E(\frac{s_{n-m}^\gamma \sqrt{s_n}}{1+C_n}\Delta W_n)&\leq E(s_{n-m}^{2\gamma} s_n)^{\frac{1}{2}}E((\frac{\Delta W_n}{1+C_n})^2)^{\frac{1}{2}}\leq E(s_{n-m}^{2\gamma} s_n)^{\frac{1}{2}}E(\Delta W_n^2)^{\frac{1}{2}}\leq U_2\,h^{\frac{1}{2}},
\end{align}
\begin{align}\label{ju5}
E(\frac{s_n \Delta \tilde {N}_n }{1+C_n})&\leq E(s_n^2)^{\frac{1}{2}}E((\frac{ \Delta \tilde {N}_n }{1+C_n})^2)^{\frac{1}{2}}\leq E(s_n^2)^{\frac{1}{2}}E(\Delta \tilde {N}_n ^2)^{\frac{1}{2}}\leq U_2 \sqrt{\beta h}.
\end{align}
Now, inequalities (\ref{fi1}), (\ref{fjui5}), (\ref{ju4}) and (\ref{ju5}) result 
\begin{align}\label{fi6}
E(s_{n+1})\leq E(s_n)(1-\lambda \,h)+\lambda \mu\,h+o(h^{\frac{1}{2}}).
\end{align}
This establishes the inequality (\ref{s2}).
\end{proof}
\section{Convergence analysis}
In this section, we prove the convergence of the BIM by using suitable stopping times and uniformly boundedness of the moments of $S(t)$ and $s(t).$\\
 For any integer $j,$ define the stopping times
\begin{equation*}
u_j:=\inf \{t\geq 0:\abs{S(t)}\geq j~\text{or}~S(t)<\frac{1}{j}\}
,~~~~~\nu _j:=\inf\{t \geq 0:\abs{s(t)}\geq j~\text{or}~s(t)<\frac{1}{j}\}
,~~~\rho _j:=u_j\wedge \nu _j,
\end{equation*}
and $\nu:=t\wedge\rho _j,$ for every $0\leq t\leq T$.
\begin{lem}\label{lem1}
For $h\in (0,1),$ there exist positive constants $M_1$ and $M_2$ such that 
\begin{align}\label{jjj1}
E\left(\int_{0}^{\nu}\frac{C_r}{1+C_r}dr\right)\leq M_1\,h^{\frac{1}{2}},
\end{align}
\begin{align}\label{j2}
E\left(\int_{0}^{\nu}\left(\frac{C_r}{1+C_r}\right)^2dr\right)\leq M_2\,h.
\end{align}
\end{lem}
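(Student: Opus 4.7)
The plan is to use the elementary bound $\frac{C_r}{1+C_r}\leq C_r$ (respectively $\left(\frac{C_r}{1+C_r}\right)^2\leq C_r^2$) and then expand the definition of $C_r$ into its three constituent contributions coming from the drift, the diffusion and the jump part. Once expanded, the drift piece gives $O(h)$ by inspection, while the diffusion and jump pieces pick up the size of the Brownian and compensated Poisson increments, which is of order $h^{1/2}$.

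First I would decompose
\begin{equation*}
C_r = C_0 h + C_1(\hat s(r),\hat s(r-\tau))\,|\Delta W(r)| + C_2\,|\Delta \tilde N(r)|,
\end{equation*}
and note that all three summands are non-negative. The key step is showing that the control function $C_1$, which is the only one that is state-dependent, is uniformly bounded on the stopped interval $[0,\nu]$ by a constant depending on $\epsilon$, $j$ and the initial data. Indeed, on $\{r<\nu\}$ one has $s(r)\in[1/j,j]$; since $\hat s(r)=s(t_k)$ for the grid point $t_k\leq r<\nu$, this gives $\hat s(r)\geq 1/j$, and $\hat s(r-\tau)$ is bounded by $\max(j,\sup_{-\tau\leq t\leq 0}|\xi(t)|)$. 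Using the definition (\ref{eju30}), in either branch we obtain $C_1(\hat s(r),\hat s(r-\tau))\leq \sigma \bigl(\sup_{-\tau\leq t\leq 0}|\xi(t)|\vee j\bigr)^{\gamma}\cdot\bigl(\epsilon^{-1/2}\vee \sqrt{j}\bigr):=K_{\epsilon,j,\xi}$.

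With this in hand I would bound each piece of $E\int_0^\nu C_r\,dr$ separately: the deterministic piece gives $C_0\,h\,T$; for the diffusion piece I use $K_{\epsilon,j,\xi}$ and the identity $E|\Delta W(r)|=\sqrt{2h/\pi}$ to get $K_{\epsilon,j,\xi}\,T\sqrt{2h/\pi}$; and for the jump piece $E|\Delta\tilde N(r)|\leq \sqrt{E|\Delta\tilde N(r)|^2}=\sqrt{\beta h}$ gives $C_2\,T\sqrt{\beta h}$. Since $h<1$, each of these is dominated by a constant multiple of $h^{1/2}$, producing the required $M_1$.

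For the second inequality the argument is parallel, starting from $\left(\frac{C_r}{1+C_r}\right)^2\leq C_r^2\leq 3\bigl[C_0^2 h^2 + C_1^2|\Delta W(r)|^2 + C_2^2|\Delta \tilde N(r)|^2\bigr]$. The deterministic term contributes $O(h^2)$, the diffusion term uses $E|\Delta W(r)|^2=h$ together with the same bound $K_{\epsilon,j,\xi}$ on $C_1$, and the jump term uses $E|\Delta\tilde N(r)|^2\leq \beta h$, so all three pieces are $O(h)$ and $M_2$ exists. The main (and really only) subtlety is the uniform bound on $C_1$ over $[0,\nu]$; the rest is a routine application of standard Brownian and compensated-Poisson moment identities together with Fubini.
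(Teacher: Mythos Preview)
Your outline follows the paper's argument closely: both use $\frac{C_r}{1+C_r}\le C_r$ (respectively $C_r^2$), expand $C_r$ into its three summands, and pick up $h^{1/2}$ (respectively $h$) from the second moments of $|\Delta W|$ and $|\Delta\tilde N|$. The one substantive slip is in your handling of the diffusion control $C_1$. You bound it on $\{r<\nu\}$ by
\[
K_{\epsilon,j,\xi}=\sigma\bigl(\sup_{-\tau\le t\le 0}|\xi(t)|\vee j\bigr)^{\gamma}\bigl(\epsilon^{-1/2}\vee\sqrt{j}\bigr)
\]
and then pull this outside the expectation to reach $K_{\epsilon,j,\xi}\,T\sqrt{2h/\pi}$. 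But $\xi$ is an $\mathcal F_0$-measurable \emph{random} initial segment, assumed only to satisfy the moment condition (\ref{xi}); it need not be bounded, so $K_{\epsilon,j,\xi}$ is a random variable and cannot serve as the deterministic constant $M_1$ (likewise $K_{\epsilon,j,\xi}^2$ for $M_2$).

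The paper deals with exactly this point by splitting according to whether the delayed argument $r\wedge\rho_j-\tau$ lies in $[-\tau,0)$ or in $[0,T]$. In the latter regime the stopping time gives $\hat s(r\wedge\rho_j-\tau)\le j$ deterministically; in the former regime $\hat s(r\wedge\rho_j-\tau)=\xi(r\wedge\rho_j-\tau)$ and one applies Cauchy--Schwarz together with $E[\xi^{2\gamma}]<\infty$ from (\ref{xi}) to separate the $\xi$-factor from $|\Delta W|$. Your argument is repaired the same way, or more directly by observing that $K_{\epsilon,j,\xi}$ is $\mathcal F_0$-measurable and hence independent of the Brownian increments, giving $E[K_{\epsilon,j,\xi}]\,T\sqrt{2h/\pi}$ (and $E[K_{\epsilon,j,\xi}^2]\cdot Th$ for the squared estimate), both finite by (\ref{xi}). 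With that correction your proof matches the paper's.
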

\begin{proof}
 We  need the following version of (\ref{xi})
, i.e., there exists a positive constant $C_1,$ such that $E(\xi(r\wedge \rho _j-\tau)^{2\gamma})\leq C_1,$ for $0\leq r\wedge \rho _j <\tau$.\\
According to the defined control functions in (\ref{aju5}), (\ref{eju30}) and (\ref{ajum5}), for $h\in(0,1),$ and  $\frac{1}{1+C_r}\leq 1,$ we have\\
\begin{align}
E\left(\int_{0}^{\nu}\frac{C_r}{1+C_r}dr\right)&\leq E\int_{0}^{\nu}C_rdr \leq \int_{0}^{t}E\left(C_{r\wedge \rho _j}\right)dr\leq \int_{0}^{T} E(C_{r\wedge \rho _j})dr \nonumber
\end{align}
\begin{align} \label{j1}
&=\int_{0}^{T} E\left(C_{0}\,h+\sigma\frac{\hat {s}(r\wedge \rho _j -\tau)^{\gamma}}{\sqrt{\hat{s}(r\wedge \rho _j)}}\abs{ \Delta{W}(r\wedge \rho _j)}1_{\hat{s}(r\wedge \rho _j) \geq \epsilon}\right. \nonumber\\
&\left. ~~~+\sigma\frac{\hat {s}(r\wedge \rho _j -\tau)^{\gamma}}{\sqrt{\epsilon}}\abs{ \Delta{W}(r\wedge \rho _j)}1_{\hat{s}(r\wedge \rho _j) < \epsilon}+C_2\abs{\Delta{\tilde {N}}(r\wedge \rho _j)}\right)dr \nonumber\\
&\leq\int_{0}^{T} E\left(C_0\,h+\sigma j^{\gamma +\frac{1}{2}}\,\abs{ \Delta{W}(r\wedge \rho _j)}1_{{\{\hat{s}(r\wedge \rho _j) \geq \epsilon}\wedge (r\wedge \rho _j \geq \tau)\}}\right.
\nonumber\\
& ~~~+\sigma j^{ \frac{1}{2}}\xi(r\wedge \rho _j-\tau)^{\gamma}\,\abs{ \Delta{W}(r\wedge \rho _j)}1_{{\{\hat{s}(r\wedge \rho _j) \geq \epsilon}\wedge (0\leq r\wedge \rho _j < \tau)\}}\nonumber\\
&~~~+    \sigma j^{\gamma}\,\epsilon ^{-\frac{1}{2}}\,\abs{ \Delta{W}(r\wedge \rho _j)}1_{{\{\hat{s}(r\wedge \rho _j) < \epsilon}\wedge (r\wedge \rho _j \geq \tau)\}})\nonumber\\
& ~~~+ \sigma \epsilon ^{-\frac{1}{2}}\xi(r\wedge \rho _j-\tau)^{\gamma}\abs{ \Delta{W}(r\wedge \rho _j)}1_{{\{\hat{s}(r\wedge \rho _j) < \epsilon}\wedge (0 \leq r\wedge \rho _j < \tau)\}}
\nonumber\\
& ~~~\left. +C_2\abs{\Delta{\tilde {N}}(r\wedge \rho _j)}\right).
\end{align}
The Cauchy Schwarz  inequality  implies
\begin{align}\label{je1}
E(\xi(r\wedge \rho _j-\tau)^{\gamma}\abs{ \Delta{W}(r\wedge \rho _j)})\leq E(\xi(r\wedge \rho _j-\tau)^{2\gamma})^{\frac{1}{2}}\,E(\abs{ \Delta{W}(r\wedge \rho _j)}^2)^{\frac{1}{2}}\leq   C_1^{\frac{1}{2}}\,h^{\frac{1}{2}}.
\end{align}
Then, using (\ref{je1}) in (\ref{j1}), we obtain
\begin{align*}
E\int_{0}^{\nu}\frac{C_r}{1+C_r}dr\leq T(C_0\,h+\sigma (j^{\gamma +\frac{1}{2}}+C_1^{\frac{1}{2}}\,j^{\frac{1}{2}})\,h^{\frac{1}{2}}+\sigma (C_1^{\frac{1}{2}}+j^{\gamma})\,\epsilon ^{-\frac{1}{2}}\,h^{\frac{1}{2}}+C_2 \sqrt{\beta} h^{\frac{1}{2}})=:M_1\,h^{\frac{1}{2}}.
\end{align*}
Moreover, similarly, there exists a constant $M_2,$ such that
\begin{align*}
E\int_{0}^{\nu}\left(\frac{C_r}{1+C_r}\right)^2dr\leq M_2\,h.
\end{align*}
\end{proof}
\begin{lem}\label{lj2}
Let $S(t)$ be the solution of equation (\ref{eju1}).
Then
\begin{equation}\label{ej1}
\lim_{h\to 0}(\sup_{0\leq t\leq T}E\abs{S(t\wedge \rho_j)-s(t\wedge\rho _j)})=0.
\end{equation}
\end{lem}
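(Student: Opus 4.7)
The plan is to work with the squared difference $\psi(t) := E|S(t\wedge\rho_j) - s(t\wedge\rho_j)|^2$, derive a Gronwall-type inequality $\psi(t)\le \eta(h) + K_j\int_0^t \psi(r)\,dr$ with $\eta(h)\to 0$ as $h\to 0$, conclude $\sup_{0\le t\le T}\psi(t)\le \eta(h)e^{K_j T}\to 0$, and then deduce (\ref{ej1}) via Cauchy--Schwarz ($E|X|\le\sqrt{EX^2}$). Writing $\nu = t\wedge\rho_j$ and subtracting (\ref{eju1}) from (\ref{eju6j}), I express $S(\nu)-s(\nu)$ as the sum of three integral differences (drift, diffusion, jump). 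For each integrand on the $s$-side I apply the identity $\tfrac{1}{1+C_r}=1-\tfrac{C_r}{1+C_r}$, which separates it into a \emph{nominal} part (matching the SDE form but evaluated at $\hat s$ in place of $S$) and a \emph{control correction} proportional to $\tfrac{C_r}{1+C_r}$.

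The nominal terms are handled pointwise by local Lipschitz estimates: on $\{r\le\rho_j\}$ both $S(r)$ and $\hat s(r)$ lie in $[1/j,j]$, so $x\mapsto \sqrt{x}$ and $x\mapsto x^\gamma$ are Lipschitz with $j$-dependent constants, giving for instance $|S(r-\tau)^\gamma\sqrt{S(r)}-\hat s(r-\tau)^\gamma\sqrt{\hat s(r)}|\le K_j(|S(r)-\hat s(r)|+|S(r-\tau)-\hat s(r-\tau)|)$, with analogues for the drift and jump integrands. After squaring (via $|a+b|^2\le 2|a|^2+2|b|^2$) and taking expectations, the Lebesgue integral is estimated directly while the diffusion integral uses It\^o isometry and the compensated-Poisson integral uses its quadratic-variation formula, both being square-integrable martingales up to $\rho_j$. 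To replace $\hat s$ by $s$, I write $S(r)-\hat s(r)=(S(r)-s(r))+(s(r)-\hat s(r))$: the first summand feeds into $\int_0^t\psi(r)\,dr$, and for the second I expand $s(r)-s_k$ on $[t_k,t_{k+1})$ via (\ref{eju6j}) and use the stopped bound $\hat s\le j$ to obtain $E|s(r)-\hat s(r)|^2 = O(h)$. The control corrections are dominated directly by Lemma~\ref{lem1}: the drift correction contributes at most $(\mu+j)^2 M_2 h$, the diffusion correction at most $\sigma^2 j^{2\gamma+1}M_2 h$, and the jump correction at most $\delta^2\beta j^2 M_2 h$.

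The principal obstacle is the delayed difference $|S(r-\tau)-\hat s(r-\tau)|^2$ arising from the diffusion term: after squaring and taking expectations it appears as $\int_0^\nu E|S(r-\tau)-\hat s(r-\tau)|^2\,dr$, and the change of variable $u=r-\tau$ recasts the portion $r\ge\tau$ as $\int_0^{t-\tau} E|S(u\wedge\rho_j)-\hat s(u\wedge\rho_j)|^2\,du \le \int_0^t(\psi(u)+O(h))\,du$, which is compatible with the Gronwall closure; on $r<\tau$ both $S(r-\tau)$ and $\hat s(r-\tau)$ coincide with $\xi(r-\tau)$ by (\ref{eju1}) and (\ref{e5}), so that contribution vanishes. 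Assembling every piece yields $\psi(t)\le C_j^{(1)}h+C_j^{(2)}\int_0^t\psi(r)\,dr$, to which Gronwall's lemma applies to close the argument.
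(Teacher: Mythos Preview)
Your argument is correct, but it takes a genuinely different route from the paper's. The paper proves Lemma~\ref{lj2} by the Yamada--Watanabe technique: it introduces the smooth approximations $\phi_n$ of $|x|$, applies It\^o's formula to $\phi_n(e(\nu))$, and exploits the pairing $\phi_n''(x)\le \tfrac{2}{n|x|}$ with the H\"older-$\tfrac12$ bound $|\sqrt{S(r)}-\sqrt{\hat s(r)}|^2\le |S(r)-\hat s(r)|$ to control the non-Lipschitz diffusion coefficient; it then lets $h\to 0$ and $n\to\infty$ in that order. Your approach instead exploits the fact that the stopping time $\rho_j$ carries a \emph{lower} barrier $1/j$ as well as the upper one, so that on $\{r<\rho_j\}$ both $S(r)$ and $\hat s(r)$ live in $[1/j,j]$, where $\sqrt{\cdot}$ and $(\cdot)^\gamma$ are genuinely Lipschitz. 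This lets you close a standard $L^2$ Gronwall loop and pass to $L^1$ via Cauchy--Schwarz. The payoff of your route is simplicity: no auxiliary sequence $\phi_n$, no double limit, and in fact you obtain the pointwise-in-$t$ $L^2$ bound $\sup_{t}E|e(t\wedge\rho_j)|^2\to 0$ as a byproduct. The paper's Yamada--Watanabe argument, on the other hand, does not rely on the lower barrier and would survive if $\rho_j$ only capped the processes from above --- which is why it is the standard device in the CIR literature, and why the paper then needs Lemma~\ref{lj2} as an input to the separate $L^2$ result in Lemma~\ref{ljju3}.

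Two small points worth tightening in your write-up: (i) when you bound $E|s(r)-\hat s(r)|^2$ by $O(h)$ via the stopped estimate $\hat s\le j$, note that for $r<\tau$ the delayed coefficient $\hat s(r-\tau)^\gamma=\xi(r-\tau)^\gamma$ is \emph{not} bounded by $j^\gamma$ pathwise --- you need a Cauchy--Schwarz step against the moment bound (\ref{xi}) there (the paper's display (\ref{ej8}) glosses over the same issue); (ii) the integrand $\tfrac{C_r}{1+C_r}$ is not adapted, so the It\^o isometry you invoke for the diffusion ``control correction'' is formal. The paper makes exactly the same move in (\ref{ejju10}), so you are at the same level of rigor, but be aware of it.
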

\begin{proof}
Let $a_0=1$ and $a_n=exp(-\frac{n(n+1)}{2})$ for $n\geq 1$, so that $\int_{a_n}^{a_{n-1}}\frac{du}{u}=n.$ For each $n\geq 1,$ there exists a continuous function $\psi _n(u)$ with support in $(a_n,a_{n-1}),$ such that
\begin{equation*}
0\leq \psi _n(u)\leq \frac{2}{nu}~~~\text{for}~~a_n<u<a_{n-1}
\end{equation*}
and $\int _{a_n}^{a_{n-1}}\psi _n(u)=1.$ Define
\begin{equation}\label{ej2}
\phi _n(x)=\int _{0}^{\abs{x}}dy\int _{0}^{y}\psi _n(u)du.
\end{equation}
Then $\phi _n\in C^2(\mathbb{R},\mathbb{R})$, $\phi _n(0)=0,$ and for all $x\in \mathbb{R},$ $\abs{\phi ^{\prime}_n(x)}\leq 1$, also for every $a_n<\abs{x}<a_{n-1},$\\
 $\abs{\phi ^{\prime \prime}_n(x)}\leq \frac{2}{n\abs{x}}$ and otherwise $\abs{\phi ^{\prime \prime}_n(x)}=0.$
One can  easily observe that
\begin{equation}\label{ej5}
\abs{x}-a_{n-1}\leq \phi_n(x)\leq \abs{x}.
\end{equation}
Let  $e(\nu):=S(\nu)-s(\nu).$ From (\ref{ej5}), we have
\begin{equation}\label{ejj50}
E(\abs{e(\nu)})\leq a_{n-1}+E(\phi _n(e(\nu))).
\end{equation}
 Applying the It$\hat{\text{o}}$'s formula, using  $\frac{1}{1+C_r}\leq 1$ and definition of $\phi _n$, we  derive
\begin{align}\label{ejjju5}
E(\phi _n(e(\nu))&=\lambda \,\mu E\int_{0}^{\nu}\phi _n ^{\prime }(e(r))(\frac{C_r}{1+C_r})dr-(\lambda +\delta \beta) E\int_{0}^{\nu}\phi _n^{\prime }(e(r))(S(r)- \frac{\hat{s}(r)}{1+C_r})dr\nonumber\\
&~~~+\frac{\sigma ^2}{2}E\int_{0}^{\nu}\phi _n ^{\prime \prime}(e(r))\left[S^{\gamma}(r-\tau)\sqrt{S(r)}-\frac{\hat{s}^{\gamma}(r-\tau)\sqrt{\hat{s}(r)}}{1+C_r}\right]^2dr\nonumber\\
&~~~+\beta E\int_{0}^{\nu}[\phi _n((1+\delta)e(r))-\phi _n(e(r))]dr\nonumber\\
&
\leq \lambda \,\mu E\int_{0}^{\nu}\frac{C_r}{1+C_r}dr+(\lambda +\delta \beta) E\int_{0}^{\nu}\abs{S(r)- \frac{\hat{s}(r)}{1+C_r}}dr\nonumber\\
&~~~+\frac{\sigma ^2}{2}E\int_{0}^{\nu}\phi _n^{\prime \prime}(e(r))\left[S^{\gamma}(r-\tau)\sqrt{S(r)}-\frac{S^{\gamma}(r-\tau)\sqrt{\hat{s}(r)}}{1+C_r}\right. \nonumber\\
&\left. ~~~ +\frac{S^{\gamma}(r-\tau)\sqrt{\hat{s}(r)}}{1+C_r}-\frac{\hat{s}^{\gamma}(r-\tau)\sqrt{\hat{s}(r)}}{1+C_r}\right]^2dr\nonumber\\
&~~~+ \delta \beta E\left( \sup _{x\in \mathbb{R}}\abs{ \phi _n^{ \prime} (x)} \int_{0}^{\nu}\abs{e(r)}dr\right)\nonumber\\
&\leq \lambda \,\mu E\int_{0}^{\nu}\frac{C_r}{1+C_r}dr+(\lambda +\delta \beta) E\int_{0}^{\nu}\left(\abs{S(r)- \frac{S(r)}{1+C_r}}+\abs{ \frac{S(r)}{1+C_r}- \frac{\hat{s}(r)}{1+C_r}}\right)dr\nonumber\\
&~~~+\sigma ^2 E\int_{0}^{\nu}\phi _n^{\prime \prime}(e(r))\left[S^{\gamma}(r-\tau)\sqrt{S(r)}-\frac{S^{\gamma}(r-\tau)\sqrt{\hat{s}(r)}}{1+C_r}\right]^2dr\nonumber\\
&~~~+\sigma ^2 E\int_{0}^{\nu}\phi _n^{\prime \prime}(e(r))\left[\frac{S^{\gamma}(r-\tau)\sqrt{\hat{s}(r)}}{1+C_r}-\frac{\hat{s}^{\gamma}(r-\tau)\sqrt{\hat{s}(r)}}{1+C_r}\right]^2dr\nonumber\\
&~~~+\delta \beta E\int_{0}^{\nu}\abs{e(r)}dr
 \nonumber\\
&\leq (\lambda \mu+\lambda \,j+\delta \beta \,j)  E\int_{0}^{\nu}\frac{C_r}{1+C_r}dr\nonumber\\
&~~~+(\lambda +2\delta \beta)  E\int_{0}^{\nu} \abs{e(r)}+(\lambda +\delta \beta)  E\int_{0}^{\nu}\abs{ s(r)- \hat{s}(r)}dr\nonumber\\
&~~~+\sigma ^2 j^{2\gamma}E\int_{0}^{\nu}\abs{\phi _n^{\prime \prime}(e(r))}\left[\sqrt{S(r)}-\frac{\sqrt{\hat{s}(r)}}{1+C_r}\right]^2dr\nonumber\\
&~~~+\sigma ^2 j E\int_{0}^{\nu}\abs{\phi _n^{\prime \prime}(e(r))}\left[S^{\gamma}(r-\tau)-\hat{s}^{\gamma}(r-\tau)\right]^2dr.
\end{align}
Now, in order to bound the right-hand side of (\ref{ejjju5}), for simplicity, we assign each term to $J_1,J_2,J_3,J_4,$ $J_5,$ respectively.\\
To bound the term $J_3,$ by definition (\ref{eju6j}), for $r\in[0,\nu]$, we have
\begin{align}
s(r)-\hat{s}(r)=\frac{\lambda (\mu -s_{[\frac{r}{h}]})(r-[\frac{r}{h}]h)}{1+C_{[\frac{r}{h}]}}+\frac{\sigma s^{\gamma}_{{([\frac{r}{h}]-N)}}\sqrt{s_{[\frac{r}{h}]}}(W(r)-W([\frac{r}{h}]h))}{1+C_{[\frac{r}{h}]}}+\frac{\delta s_{[\frac{r}{h}]}(\tilde {N}(r)-\tilde {N}([\frac{r}{h}]h))}{1+C_{[\frac{r}{h}]}} .
\end{align}
So, for every $h\in(0,1)$,
\begin{align}\label{ej8}
E\int_{0}^{\nu}\abs{s(r)-\hat{s}(r)}dr&\leq \lambda (\mu +j)\,h\,T+\sigma j^{\gamma+\frac{1}{2}}E\int_{0}^{\nu}\abs{W(r)-W([\frac{r}{h}]h)}dr\nonumber\\
&~~~+\delta j E\int_{0}^{\nu}\abs{\tilde {N}(r)-\tilde {N}([\frac{r}{h}]h)}dr
\nonumber\\
&\leq \lambda (\mu +j)\,h\,T+\sigma j^{\gamma+\frac{1}{2}}\int_{0}^{T}E\abs{W(r\wedge \rho _j)-W([\frac{r\wedge \rho _j}{h}]h)}dr\nonumber\\
&~~~+\delta j E\int_{0}^{T}\abs{\tilde {N}(r\wedge \rho _j)-\tilde {N}([\frac{r\wedge \rho _j}{h}]h)}dr\nonumber\\
&\leq \lambda (\mu +j)\,h\,T+ \sigma T j^{\gamma+\frac{1}{2}}h^{\frac{1}{2}}+\delta jT\sqrt{\beta}h^{\frac{1}{2}} =:D h^{\frac{1}{2}}.
\end{align}
To bound $J_4$, from definition of $\phi _n$ and using Lemma \ref{lem1} and inequality (\ref{ej8}), we obtain
\begin{align}\label{ej7}
E\int_{0}^{\nu}&\abs{\phi _n^{\prime \prime}(e(r))}\left[\sqrt{S(r)}-\frac{\sqrt{\hat{s}(r)}}{1+C_r}\right]^2dr=E\int_{0}^{\nu}\abs{\phi _n^{\prime \prime}(e(r))}\left[\sqrt{S(r)}-\sqrt{\hat{s}(r)}+\sqrt{\hat{s}(r)}-\frac{\sqrt{\hat{s}(r)}}{1+C_r}\right]^2dr\nonumber\\
&\leq 2 E\int_{0}^{\nu}\abs{\phi _n^{\prime \prime}(e(r))}\left[\sqrt{S(r)}-\sqrt{\hat{s}(r)}\right]^2dr+ 2E\int_{0}^{\nu}\abs{\phi _n^{\prime \prime}(e(r))}\left[\sqrt{\hat{s}(r)}-\frac{\sqrt{\hat{s}(r)}}{1+C_r}\right]^2dr\nonumber\\
~~~& \leq 2 E\int_{0}^{\nu}\abs{\phi _n^{\prime \prime}(e(r))}\abs{S(r)-\hat{s}(r)}dr +2jE\int_{0}^{\nu}\abs{\phi _n^{\prime \prime}(e(r))}\left(\frac{C_r}{1+C_r}\right)^2dr\nonumber\\
&\leq 2 E\int_{0}^{\nu}\abs{\phi _n^{\prime \prime}(e(r))}\abs{S(r)-\hat{s}(r)}dr + \frac{4j}{na_n}E\int_{0}^{\nu}\left(\frac{C_r}{1+C_r}\right)^2dr\nonumber\\
&\leq 2E\int_{0}^{\nu}\frac{2}{n}dr+\frac{4}{na_n}E\int_{0}^{\nu}\abs{s(r)-\hat{s}(r)}dr+\frac{4j}{na_n}\,M_2\,h\nonumber\\
&\leq \frac{4T}{n}+\frac{4j}{na_n}M_2\,h+\frac{4}{na_n}D h^{\frac{1}{2}}.
\end{align}
Now considering $J_5$ and definition of $\phi _n$,
\begin{align}\label{ej6}
E\int_{0}^{\nu}&\abs{\phi _n^{\prime \prime}(e(r))}\left[S^{\gamma}(r-\tau)-\hat{s}^{\gamma}(r-\tau)\right]^2dr\nonumber\\
&\leq C_j E\int_{0}^{\nu}\abs{\phi _n^{\prime \prime}(e(r))}\left[S(r-\tau)-\hat{s}(r-\tau)\right]^{2\gamma}dr\nonumber\\
&\leq \frac{2C_j\bar{C}}{na_n}E\int_{0}^{\nu}\left[s(r-\tau)-\hat{s}(r-\tau)\right]^{2\gamma}dr
+\frac{2C_j\bar{C}}{na_n}E\int_{0}^{\nu}\abs{e(r-\tau)}^{2\gamma}dr,
\end{align}
where
\begin{equation*}
\bar{C}=\left\{\begin{array}{l}
1,~~~~~~0<\gamma\leq\frac{1}{2},\\
2^{2\gamma -1},~~~~~~\gamma >\frac{1}{2},
\end{array}\right.
\end{equation*}
and
\begin{equation*}
C_j=\left\{\begin{array}{l}
\bar{C}^2_j,~~~~~~\gamma >1,\\
1,~~~~~~\text{otherwise},
\end{array}\right.
\end{equation*}
The last inequality is true due to the following fact. For $x,y>0$  and $\theta \in(0,1],$
\begin{equation*}
\abs{x^{\theta}-y^{\theta}}\leq \abs{x-y}^{\theta},
\end{equation*}
 and  for $\abs{x}\leq j,$ $\abs{y}<j$ and $\theta >1,$
\begin{equation*}
\abs{x^{\theta}-y^{\theta}}\leq \bar{C}_j\abs{x-y}^{\theta},
\end{equation*}
 Here $\bar{C}_j,$ is a constant depending on $j.$
  Substituting
(\ref{jjj1}), (\ref{ej8}), (\ref{ej7}) and (\ref{ej6}),   in
(\ref{ejjju5}), we derive
\begin{align*}
E(\phi _n(e(\nu)))&\leq \sigma ^2 j \left(\frac{2C_j\bar{C}}{na_n}E\int_{0}^{\nu}\left[s(r-\tau)-\hat{s}(r-\tau)\right]^{2\gamma}dr
+\frac{2C_j\bar{C}}{na_n}E\int_{0}^{\nu}\abs{e(r-\tau)}^{2\gamma}dr\right)\nonumber\\
&~~~+\sigma ^2 j ^{2\gamma} (\frac{4T}{n}+\frac{4j}{na_n}M_2\,h)+(\lambda +2 \delta \beta) E\int_{0}^{\nu}\abs{e(r)}dr\nonumber\\
&~~~+(\lambda \mu +\lambda j+ \delta \beta j)M_1h^{\frac{1}{2}}+(\lambda + \delta \beta) D\,h^{\frac{1}{2}}+D\sigma ^2 j^{2\gamma}\frac{4}{na_n}\,h^{\frac{1}{2}} .
\end{align*}
 From (\ref{ejj50}), we then obtain
\begin{align}
E\abs{e(\nu)}&\leq a_{n-1}+\sigma ^2 j ^{2\gamma} (\frac{4T}{n}+\frac{4j}{na_n}M_2\,h)+(\lambda \mu +\lambda j+ \delta \beta j)M_1\,h^{\frac{1}{2}}\nonumber\\
&~~~+(\lambda + \delta \beta)D\,h^{\frac{1}{2}}+D\sigma ^2 j^{2\gamma}\frac{4}{na_n}\,h^{\frac{1}{2}}+(\lambda +2 \delta \beta) E\int_{0}^{\nu}\abs{e(r)}dr\nonumber\\
&~~~+\sigma ^2 j \left(\frac{2C_j\bar{C}}{na_n}E\int_{0}^{\nu}\left[s(r-\tau)-\hat{s}(r-\tau)\right]^{2\gamma}dr
+\frac{2C_j\bar{C}}{na_n}E\int_{0}^{\nu}\abs{e(r-\tau)}^{2\gamma}dr\right)\nonumber\\
&=:a_{n-1}+\frac{\alpha _1}{n}+\alpha _2 h^{\frac{1}{2}}+\alpha _3 E\int_{0}^{\nu}\left[s(r-\tau)-\hat{s}(r-\tau)\right]^{2\gamma}dr\nonumber\\
&~~~+\frac{\alpha _4 }{na_n}E\int_{0}^{\nu}\abs{e(r-\tau)}^{2\gamma}dr+(\lambda +2 \delta \beta) E\int_{0}^{\nu}\abs{e(r)}dr,
\end{align}
where $\alpha _1$ and $\alpha _4$ are independent of $n,$ and
$\alpha _2,$ $\alpha _3$ depend on $n.$ Then, following the  proof of Lemma 4.2 in \cite{ji1} (right after Equation (22)),  this
lemma is proved.
\end{proof}
\begin{lem}\label{ljju3}
For the stopping times $u_j,$ $\nu _j$ and $\rho _j$, we have
\begin{equation}\label{ej19}
\lim _{h\to 0}E\left(\sup_{0\leq t\leq T} \abs{S(t\wedge \rho_j)-s(t\wedge \rho_j)}^2\right)=0.
\end{equation}
\end{lem}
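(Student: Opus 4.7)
The plan is to upgrade the pointwise-in-time $L^1$ convergence of Lemma \ref{lj2} to supremum-in-time $L^2$ convergence on the stopped interval, via an It\^o / Burkholder--Davis--Gundy bound combined with Gronwall's inequality, exploiting the uniform bounds $\frac{1}{j}\leq S(t),\,s(t),\,\hat s(t)\leq j$ that hold on $[0,\rho_j]$.

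First, subtracting the integral representations (\ref{eju1}) and (\ref{eju6j}), I decompose
\[
e(t\wedge\rho_j)=D(t)+M(t)+J(t),
\]
where $D$, $M$, $J$ are the drift, Brownian, and compensated-Poisson contributions, respectively. In each integrand the BIM coefficient $f(\hat s)/(1+C_r)$ is rewritten as $[f(S)-f(\hat s)]+f(\hat s)\,C_r/(1+C_r)$, separating a Lipschitz-type increment from a discretization perturbation. Using $(a+b+c)^2\leq 3(a^2+b^2+c^2)$, taking $\sup_{0\leq t\leq T}$ followed by expectation, and applying Cauchy--Schwarz to $D$, Doob's $L^2$-inequality to $M$, and BDG to $J$, I obtain an estimate of the form
\[
E\sup_{0\leq t\leq T}|e(t\wedge\rho_j)|^2\leq K_j\Bigl[\varepsilon(h)+\int_0^T E\sup_{0\leq u\leq r}|e(u\wedge\rho_j)|^2\,dr+\int_0^T E|e((r-\tau)\wedge\rho_j)|^{2\gamma}\,dr\Bigr],
\]
with $K_j$ depending only on $j$ (through the Lipschitz constants of $\sqrt{\cdot}$ and $(\cdot)^{\gamma}$ on $[1/j,j]$) and with $\varepsilon(h)\to 0$ as $h\to 0$.

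The vanishing of $\varepsilon(h)$ is driven by the previous lemmas: the perturbation contribution produces a multiple of $E\int_0^{\nu}(C_r/(1+C_r))^2\,dr=O(h)$ by Lemma \ref{lem1}, while the freezing error yields $E\int_0^{\nu}|s-\hat s|^2\,dr\leq 2j\,E\int_0^{\nu}|s-\hat s|\,dr=O(h^{1/2})$ through the bound (\ref{ej8}) already established inside the proof of Lemma \ref{lj2}. The delay integral is handled by noting that on $r\in[0,\tau]$ both $S(r-\tau)$ and $\hat s(r-\tau)$ coincide with $\xi(r-\tau)$, so the integrand vanishes, while on $r\in[\tau,T]$ Lemma \ref{lj2} together with the boundedness $|e|\leq 2j$ and Jensen's inequality (or the linearization $|e|^{2\gamma}\leq(2j)^{2\gamma-1}|e|$ when $2\gamma>1$) drives $E|e((r-\tau)\wedge\rho_j)|^{2\gamma}\to 0$ uniformly in $r$.

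A Gronwall application, iterated over the delay blocks $[k\tau,(k+1)\tau]$ so that on each block the delay integral has already been controlled on the previous one, then yields the claimed convergence. The main obstacle will be keeping the $j$-dependent constants tidy through the Brownian integrand, whose split involves $(\sqrt{S}-\sqrt{\hat s})^2\leq\tfrac{j}{4}(S-\hat s)^2$ (using the lower bound $1/j$ of the stopped processes) cleanly separated from the $C_r/(1+C_r)$ perturbation controlled by Lemma \ref{lem1}, and similarly for the delay factor $S^{\gamma}(r-\tau)$.
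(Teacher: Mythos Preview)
Your proposal is correct and follows essentially the same route as the paper: square the error, apply Doob/BDG to the martingale parts, split each coefficient difference into a Lipschitz increment plus a $C_r/(1+C_r)$ perturbation controlled by Lemma~\ref{lem1} and (\ref{ej8}), absorb the delay $2\gamma$-power term via Lemma~\ref{lj2}, and close with Gronwall over delay blocks. The only cosmetic difference is that the paper bounds $(\sqrt{S}-\sqrt{\hat s})^2\leq|S-\hat s|$ (feeding that piece directly to Lemma~\ref{lj2}) instead of your Lipschitz estimate $(\sqrt{S}-\sqrt{\hat s})^2\leq\tfrac{j}{4}(S-\hat s)^2$ on $[1/j,j]$, and then defers the final Gronwall argument to \cite{ji1}.
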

\begin{proof}
From (\ref{eju6j}) and the
H$\ddot{\text{o}}$lder inequality, we get
\begin{align}\label{aju4}
(S(\nu )-s(\nu))^2&\leq 4T\lambda ^2 \mu ^2 \int _{0}^{\nu} \left(1-\frac{1}{1+C_r}\right)^2dr+4T\lambda ^2 \int _{0}^{\nu}\left(S(r)-\frac{\hat {s}(r)}{1+C_r}\right)^2dr \nonumber\\
&~~~+4\sigma ^2 \left[\int _{0}^{\nu}\left(S(r-\tau)^{\gamma}\sqrt{S(r)}-\frac{\hat{s}^{\gamma}(r-\tau)\sqrt{\hat{s}(r)}}{1+C_r}\right)dW(r)\right]^2\nonumber\\
&~~~+4\delta ^2 \left[\int _{0}^{\nu}\left(S(r)-\frac{\hat {s}(r)}{1+C_r}\right)d\tilde {N}(r)\right]^2.
\end{align}
Let $\nu _1=t_1\wedge \rho _j.$ By the Doob martingale inequality
\cite{r16},
 for every $t_1\in[0,T],$ we have
\begin{align}\label{ejju10}
&E\left(\sup_{0\leq t\leq t_1} \abs{S(t\wedge \rho_j)-s(t\wedge \rho_j)}^2\right)\nonumber\\
&~~~~\leq 4T\lambda ^2 \mu ^2 E \int _{0}^{\nu _1} \left(1-\frac{1}{1+C_r}\right)^2dr+(4T\lambda ^2+16\beta \delta ^2)E \int _{0}^{\nu _1}\left(S(r)-\frac{\hat {s}(r)}{1+C_r}\right)^2dr \nonumber\\
&~~~~~~~+ 16\sigma ^2 E\int _{0}^{\nu _1}\left[S(r-\tau)^{\gamma}\sqrt{S(r)}-\frac{\hat{s}^{\gamma}(r-\tau)\sqrt{\hat{s}(r)}}{1+C_r}\right]^2dr.
\end{align}
On the other hand, from  (\ref{j2}),  $\frac{1}{1+C_r}\leq 1$ and
(\ref{ej8}), we have
\begin{align}\label{a2}
&E \int _{0}^{\nu _1}\left(S(r)-\frac{\hat {s}(r)}{1+C_r}\right)^2dr \leq 2 E \int _{0}^{\nu _1}\left(S(r)-\frac{S(r)}{1+C_r}\right)^2dr+2E\int _{0}^{\nu _1}\left(\frac{S(r)}{1+C_r}-\frac{\hat {s}(r)}{1+C_r}\right)^2dr\nonumber\\
&~~~~~~~~~\leq 2j^2 E \int _{0}^{\nu _1}\left(\frac{C_r}{1+C_r}\right)^2dr+4E\int _{0}^{\nu _1}(S(r)-s(r))^2dr+4E\int _{0}^{\nu _1}(s(r)-\hat{s}(r))^2dr\nonumber\\
&~~~~~~~~~\leq 2j^2M_2\,h+8jE\int _{0}^{\nu _1}\abs{s(r)-\hat{s}(r)}dr+4E\int _{0}^{\nu _1}(S(r)-s(r))^2dr\nonumber\\
&~~~~~~~~~\leq 2j^2M_2\,h+8 j\,D\,h^{\frac{1}{2}}+4E\int _{0}^{\nu _1}(S(r)-s(r))^2dr.
\end{align}
Similarly, from (\ref{ej8}) and the inequality $\frac{1}{1+C_r}\leq 1,$ we also
get
\begin{align}
&E\int _{0}^{\nu _1}\left[S(r-\tau)^{\gamma}\sqrt{S(r)}-\frac{\hat{s}^{\gamma}(r-\tau)\sqrt{\hat{s}(r)}}{1+C_r}\right]^2dr\nonumber\\
&~~~~=
E\int_{0}^{\nu _1}\left[S^{\gamma}(r-\tau)\sqrt{S(r)}-\frac{S^{\gamma}(r-\tau)\sqrt{\hat{s}(r)}}{1+C_r} +\frac{S^{\gamma}(r-\tau)\sqrt{\hat{s}(r)}}{1+C_r}-\frac{\hat{s}^{\gamma}(r-\tau)\sqrt{\hat{s}(r)}}{1+C_r}\right]^2dr\nonumber\\
&~~~~\leq 2 E\int_{0}^{\nu _1}\left[S^{\gamma}(r-\tau)\sqrt{S(r)}-\frac{S^{\gamma}(r-\tau)\sqrt{\hat{s}(r)}}{1+C_r}\right]^2dr\nonumber\\
&~~~~~~~+2E\int_{0}^{\nu _1}\left[\frac{S^{\gamma}(r-\tau)\sqrt{\hat{s}(r)}}{1+C_r}-\frac{\hat{s}^{\gamma}(r-\tau)\sqrt{\hat{s}(r)}}{1+C_r}\right]^2dr\nonumber\\
&~~~~\leq 2 j^{2\gamma} E\int_{0}^{\nu _1}\left[\sqrt{S(r)}-\frac{\sqrt{\hat{s}(r)}}{1+C_r}\right]^2dr+2 jE\int_{0}^{\nu _1}(S^{\gamma}(r-\tau)-\hat{s}(r-\tau)^{\gamma})^2dr\nonumber
\end{align}
\begin{align}\label{a1}
&~~~~\leq 2 j^{2\gamma}E\int_{0}^{\nu _1}\left[\sqrt{S(r)}-\sqrt{\hat{s}(r)}+\sqrt{\hat{s}(r)}-\frac{\sqrt{\hat{s}(r)}}{1+C_r}\right]^2dr\nonumber\\
&~~~~~~~+ 2j C_j E\int_{0}^{\nu _1} \abs{S(r-\tau)-\hat{s}(r-\tau)}^{2\gamma}dr \nonumber\\
&~~~~\leq 4j^{2\gamma}E\int_{0}^{\nu _1}\abs{S(r)-\hat{s}(r)}dr+ 4 j^{2\gamma +1}M_2\,h+2 j C_j E\int_{0}^{\nu _1} \abs{S(r)-\hat{s}(r)}^{2\gamma}dr\nonumber\\
& ~~~~\leq 4j^{2\gamma}E\int_{0}^{\nu _1}\abs{S(r)-s(r)}dr +4D\, j^{2\gamma}\,h^{\frac{1}{2}}+4 j^{2\gamma +1}M_2\,h+ 2j C_j E\int_{0}^{\nu _1} \abs{S(r)-\hat{s}(r)}^{2\gamma}dr.
\end{align}
Then, applying (\ref{ejju10}), (\ref{a2}), (\ref{a1}) and following the proof of Lemma 4.3 in \cite{ji1} (right after Equation (40)),  the proof is
completed.
\end{proof}
\begin{thm}\label{tj1}
Let $S(t)$ be the solution of the equation (\ref{eju1}). Then the numerical solution (\ref{eju6j}) converges to $S(t)$, that is,
\begin{equation}\label{ejju24}
\lim _{h\to 0}E\left(\sup_{0\leq t\leq T} \abs{S(t)-s(t)}^2\right)=0.
\end{equation}
\end{thm}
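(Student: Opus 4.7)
The plan is a standard localization-plus-moment argument: decompose the expectation according to whether the stopping time $\rho_j$ exceeds $T$ or not, then combine Lemma \ref{ljju3} for the stopped part with a H\"older inequality together with the uniform $p$-moment bounds for the complement. Using that $S(t)=S(t\wedge\rho_j)$ and $s(t)=s(t\wedge\rho_j)$ on the event $\{\rho_j>T\}$, the first step is to write
\begin{equation*}
E\Bigl(\sup_{0\leq t\leq T}|S(t)-s(t)|^2\Bigr)
\leq E\Bigl(\sup_{0\leq t\leq T}|S(t\wedge\rho_j)-s(t\wedge\rho_j)|^2\Bigr)
+ E\Bigl(\sup_{0\leq t\leq T}|S(t)-s(t)|^2\mathbf{1}_{\{\rho_j\leq T\}}\Bigr).
\end{equation*}

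For the second term I would apply the H\"older inequality with conjugate exponents $p/2$ and $p/(p-2)$ for some fixed $p>2$:
\begin{equation*}
E\Bigl(\sup_{0\leq t\leq T}|S(t)-s(t)|^2\mathbf{1}_{\{\rho_j\leq T\}}\Bigr)
\leq \Bigl(E\sup_{0\leq t\leq T}|S(t)-s(t)|^p\Bigr)^{2/p}\bigl(P(\rho_j\leq T)\bigr)^{(p-2)/p}.
\end{equation*}
Theorem \ref{ljuj1} together with the analogous $p$-moment bound for $S(t)$ proved in \cite{ji1} makes the first factor bounded by some constant $K$ independent of $h$. It remains to show that $P(\rho_j\leq T)\to 0$ as $j\to\infty$, uniformly in $h$. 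Since $\rho_j=u_j\wedge\nu_j$, Markov's inequality applied to the uniform moment bounds controls $P(\sup_{[0,T]}|S(t)|\geq j)$ and $P(\sup_{[0,T]}|s(t)|\geq j)$ by $C/j^p$; the strict positivity of $S(t)$ from \cite{ji1} gives $P(\inf_{[0,T]}S(t)\leq 1/j)\to 0$ by continuity of measure, and the corresponding tail for $s(t)$ can be extracted by combining the explicit recursion (\ref{eju31}), the mean-reverting estimate of Theorem \ref{ls1}, and the $\epsilon$-life time property of Theorem \ref{l4}.

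The theorem then follows from the standard two-parameter argument: given any $\varepsilon>0$, first select $j$ so large that $K^{2/p}\bigl(P(\rho_j\leq T)\bigr)^{(p-2)/p}<\varepsilon/2$; with this $j$ fixed, Lemma \ref{ljju3} permits one to choose $h$ small enough that the first term is also below $\varepsilon/2$. The main obstacle is the uniform-in-$h$ control of $P(\nu_j\leq T)$: the BIM preserves only $s_n\geq 0$ in the $\epsilon$-life-time sense rather than providing a strict positive lower bound, so ruling out that the continuous interpolation $s(t)$ hits $1/j$ too often requires the technical combination of the explicit scheme and the moment estimates described above, a step essentially adapted from the Euler--Maruyama proof of Theorem~4.1 in \cite{ji1}.
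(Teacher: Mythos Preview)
Your overall approach --- localize via $\rho_j$, control the stopped part by Lemma~\ref{ljju3}, control the complement by H\"older plus the $p$-moment bounds of Theorem~\ref{ljuj1} and \cite{ji1}, then make the two-parameter $\varepsilon/2$--$\varepsilon/2$ choice of $j$ and $h$ --- is precisely the argument the paper invokes by appealing to Theorem~4.1 of \cite{ji1}. One correction: neither Theorem~\ref{ls1} (which bounds $E(s_n)$ from \emph{above}) nor Theorem~\ref{l4} (which is only conditional nonnegativity, $s_{n+1}\ge 0$ given $s_n\ge\epsilon$) furnishes a uniform-in-$h$ estimate on $P\bigl(\inf_{[0,T]} s(t)<1/j\bigr)$, so your attribution of the lower-tail control to those two results is misplaced; your instinct that this is the delicate point is correct, but its resolution lives entirely inside the argument of \cite{ji1} to which both you and the paper defer.
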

\begin{proof}
By Theorem \ref{ljuj1} and Lemma \ref{ljju3} and in the same way as Theorem 4.1 in \cite{ji1} the conclusion follows.
\end{proof}
\section{Numerical examples}
In this section, we illustrate some numerical examples that confirm the results  in the previous sections. Also, by the convergence theory in Section 4, we show that the BIM can be used to compute some financial quantities.\\
Consider the  delay CIR model with jump
\begin{equation}\label{exj1}
\left\{\begin{array}{l}
dS(t)=\lambda (\mu -S(t))dt+\sigma S(t-1)^{\gamma}\sqrt{S(t)}dW(t)+\delta S(t)d\tilde{N}(t),~~t\geq 0,\\
S(t)=1,~~t\in [-1,0],
\end{array}\right.
\end{equation}
We consider the two following examples.\\
 \noindent {\bf{Example 1.} }$\lambda=5, \mu=0.5, \sigma=1.5, \gamma=0.5, \delta=1, \beta=2.$\\
  \noindent {\bf{Example 2.} }$\lambda=100, \mu=5, \sigma=2, \gamma=1, \delta=2, \beta=4.$\\
  Figs. \ref{Fj1}-\ref{Fj4} show the values $S(t)$ vs. $t$ for Examples 1 and 2 by the balanced and Euler methods, with ten solution paths. From these figures it can be observed that the balanced method preserves non-negativity of the solution of these Examples even for the large step size $h=0.5$, while the Euler method does not preserve this property for these Examples even  for the small step size $h=0.01$. 
In Figs. \ref{Fj5}, \ref{Fj6}, we apply the BIM to Examples 1 and 2.  We estimates the rate of convergence by drawing the strong error at the endpoint $T=1,$ $e_h^{strong}:=E\abs{S(T)-s_T}^2$. We plot $e_h^{strong}$ against $h$ on a log-log scale.  Since we do not have an explicit solution of Examples 1 and 2, we take the BIM  with  step size $h=2^{-11}$ as a reference solution. For showing the convergence, we compare the reference solution with  the BIM  evaluated with $2^{2i-1}\,h,~i=1,2,3,4,5.$ We  compute $500$ different solution paths. Also,  we apply  the Euler method for Examples 1 and 2, for comparison purpose. From these figures it can be seen that, the rate of convergence of the balanced method is better than the Euler method for both of  Examples. Figs. \ref{Fsj1}- \ref{Fsj4}, show the values of $E(S(t))$ and $E(S(t)^2)$ vs. $t$ for the Examples 1 and 2, by the BIM with step size $h=0.1$ and with 1000 solution paths. Figs. \ref{Fsj1} and \ref{Fsj2}, show $\lim _{n\to\infty}E(s_n)\leq \mu$; similarly, Figs. \ref{Fsj3} and \ref{Fsj4}, show $E(S(t)^2)$ is bounded, confirming the results of Theorems \ref{ljuj1}, \ref{ls1}.
\\
Now, we use bonds and barrier options to show our results.\\
 \noindent {\bf{Example 3.}  }(Bonds): In the case where  the SDDE with jump (\ref{eju1}) describes short-term interest rate dynamics, the price of a bond at the end of period is given by
\begin{align*}
B(T):=E\left[exp\left(-\int_{0}^{T}S(t)dt\right)\right].
\end{align*}
By the step process $\hat{s}(t)$ in (\ref{e5}), a natural approximation to compute $B(T)$ is 
\begin{align*}
\hat{B}_h(T):=E\left[exp\left(-\int_{0}^{T}\hat{s}(t)dt\right)\right].
\end{align*}
We have
\begin{align*}
\lim_{h\to 0}|B(T)-\hat{B}_h(T)|=0.
\end{align*}
The proof is the similar to that of Theorem 4.1 in \cite{hi1}.\\
 \noindent {\bf{Example 4.}  } (A path dependent option): Let $S(t)$ be the solution of the equation (\ref{eju1}) and $\hat{s}(t)$ be the BIM process defined by (\ref{e5}). We consider an up-and-out call option with the expiry time $T,$ the  exercise price $K$ and the fixed barrier $B$. Payoff of this option  at expiry time $T$ is $(S(T)-K)^{+},$ if $S(t)$ never decreases below the fixed barrier $B$ and is zero otherwise. We suppose that the expected payoff is computed from (\ref{e5}). Define
\begin{align*}
V:=E[(S(T)-K)^{+}1_{0\leq S(t) \leq B,0\leq t\leq T}];
\end{align*}
and
\begin{align*}
\hat{V}_h:=E[(\hat{s}(T)-K)^{+}1_{0\leq \hat{s}(t) \leq B,0\leq t\leq T}];
\end{align*}
where $K$ and $B$ are constants. We have
\begin{align*}
\lim _{h\to 0}|V-\hat{V}_h|=0.
\end{align*}
The proof is  the same to that of Theorem 5.1 in \cite{hi1}.
\begin{figure}
\begin{center}
\includegraphics[width=12.cm]{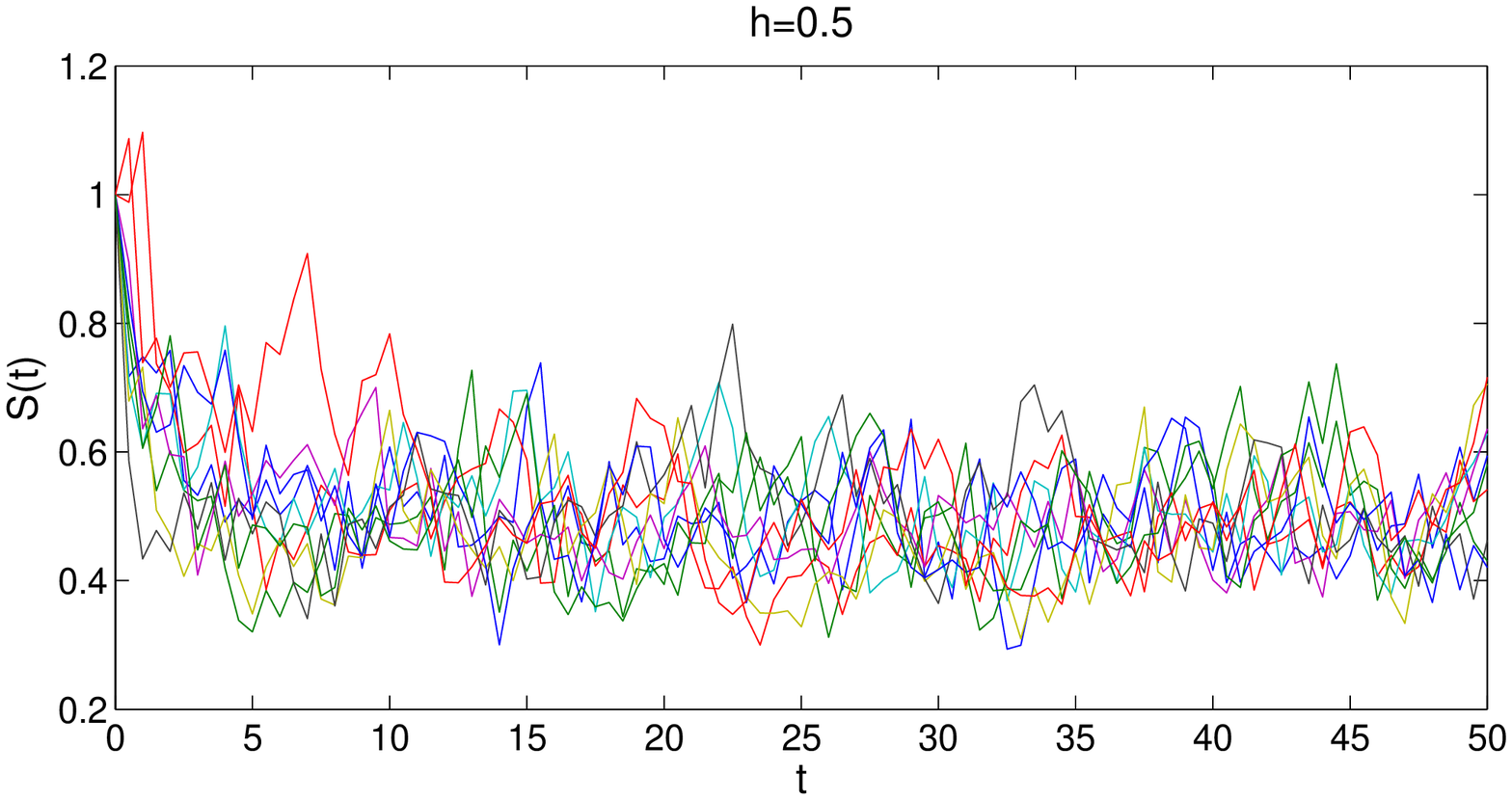}\\
\includegraphics[width=12cm]{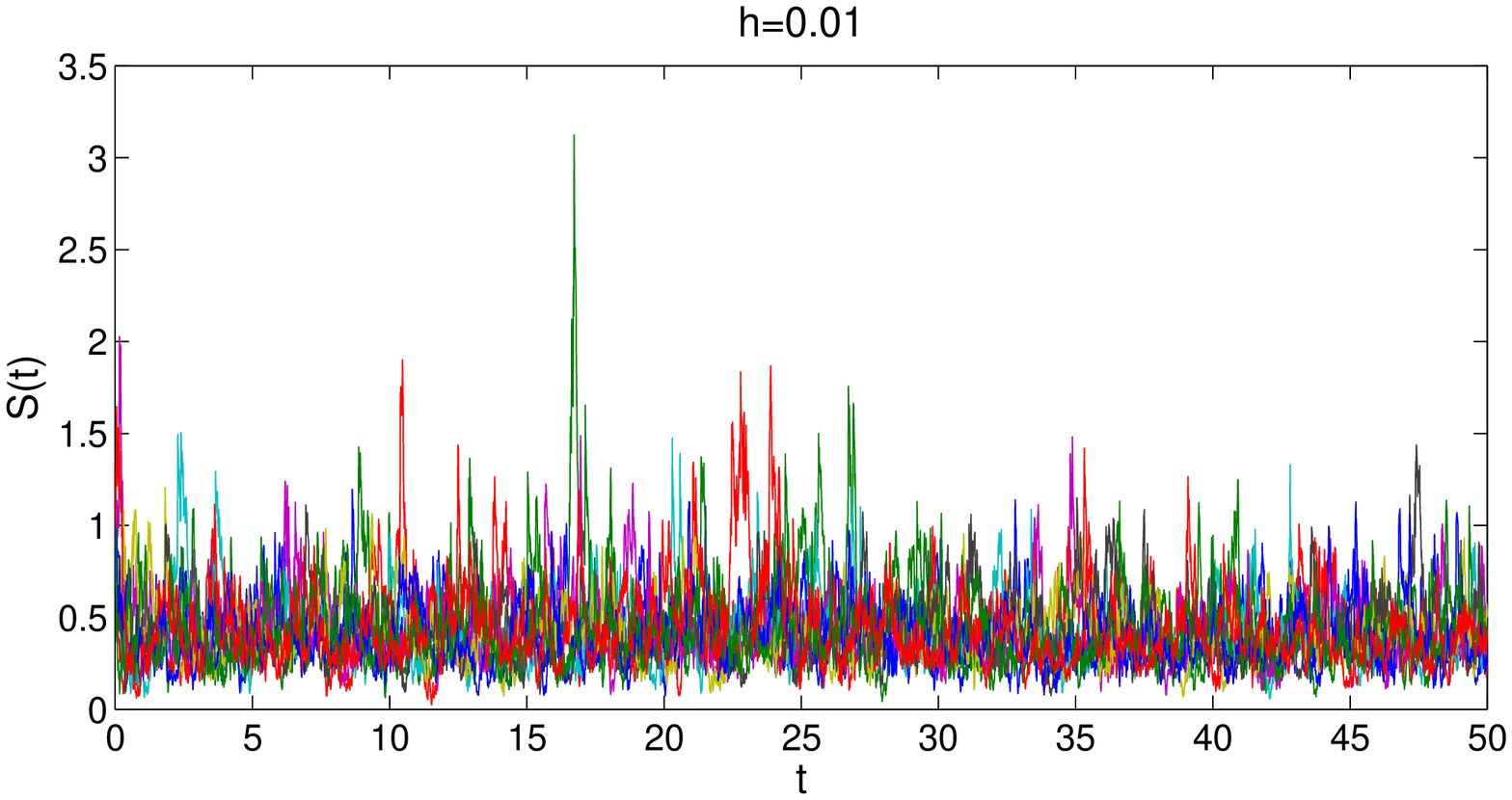}
\caption{Ten solution paths of
 Example 1, approximated by BIM  with $C_0=10,\,C_2=1$ and $C_1=\sigma \frac{s_{n-m}^{\gamma}}{\sqrt{s_n}}.$ }
\label{Fj1}
\includegraphics[width=12cm]{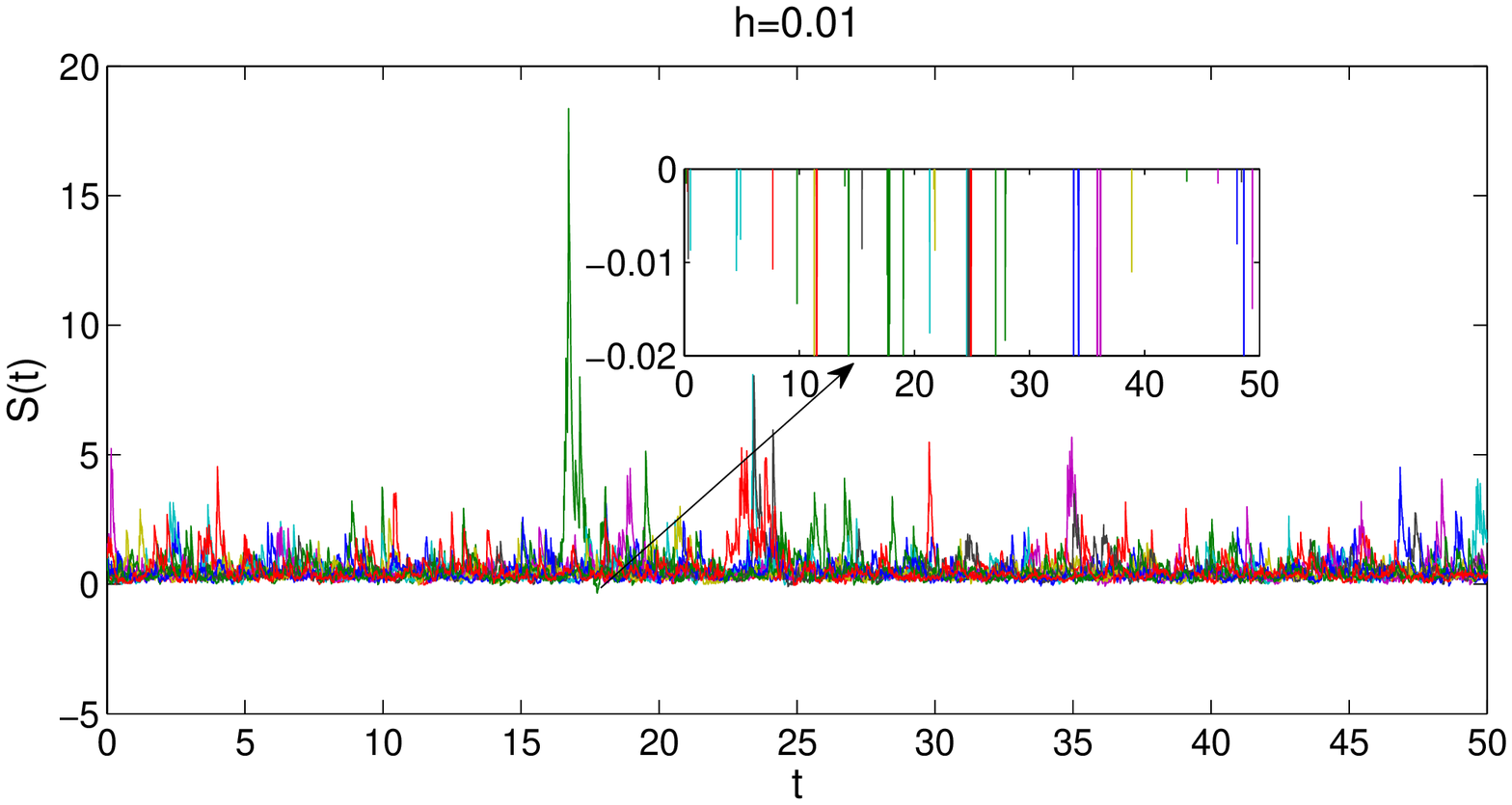}
\caption{Ten solution paths of
 Example 1,  approximated by Euler method. }
 \label{Fj2}
 \end{center}
\end{figure}
 \begin{figure}
\begin{center}
\includegraphics[width=12.cm]{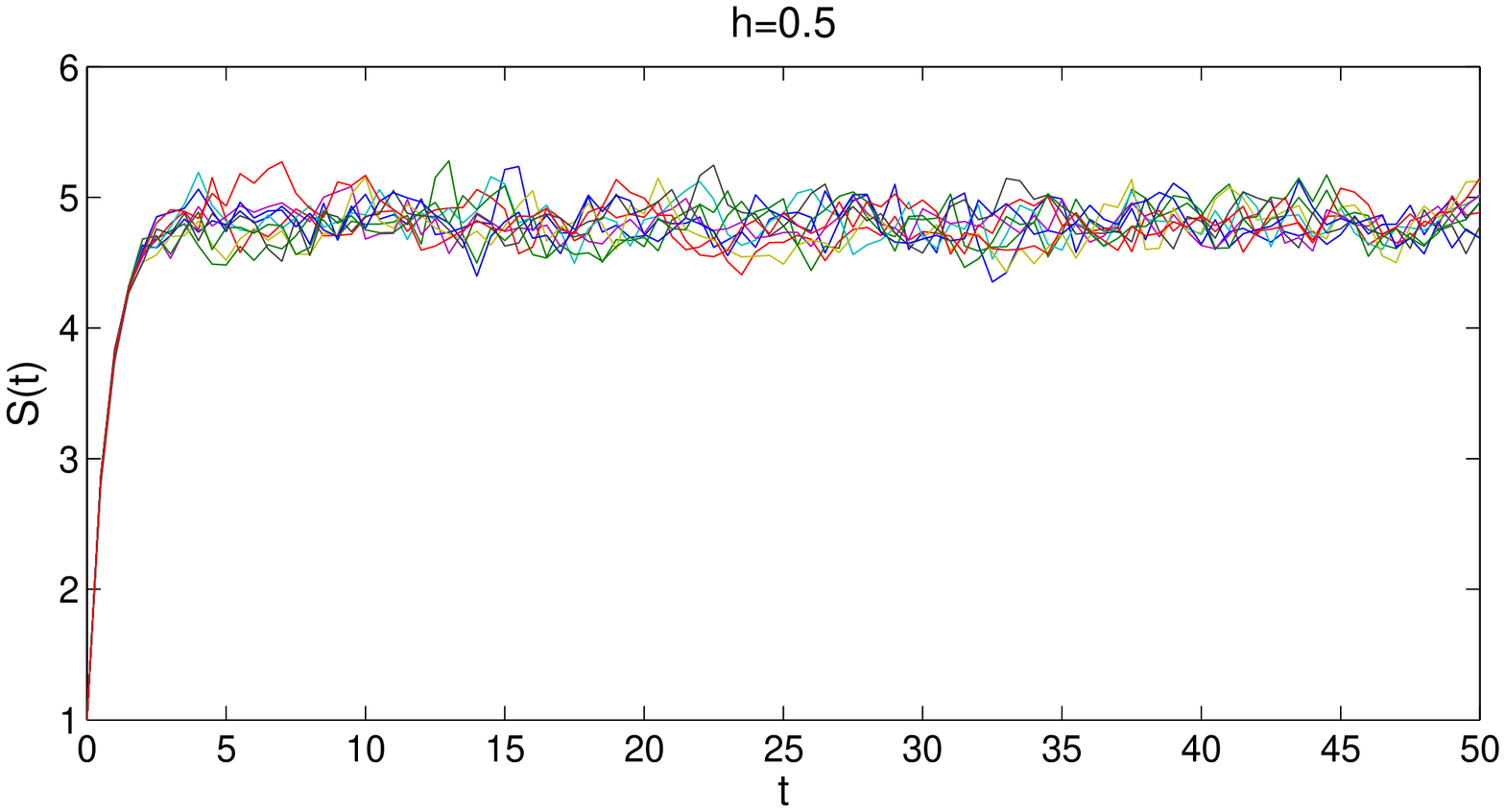}\\
\includegraphics[width=12cm]{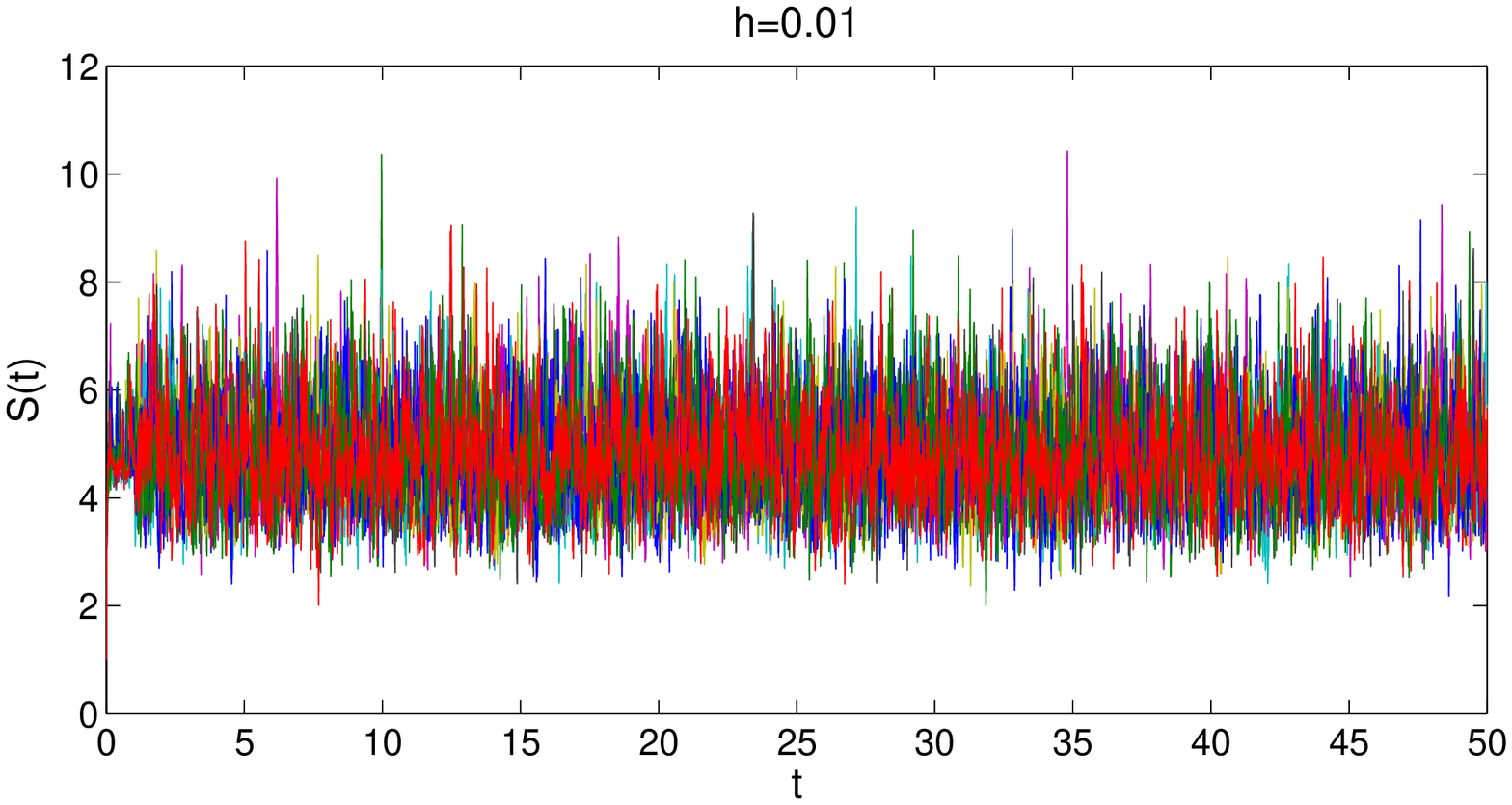}
\caption{Ten solution paths of
 Example 2, approximated by BIM  with $C_0=200,\,C_2=5$ and $C_1=\sigma \frac{s_{n-m}^{\gamma}}{\sqrt{s_n}}.$ }
\label{Fj3}
\includegraphics[width=12cm]{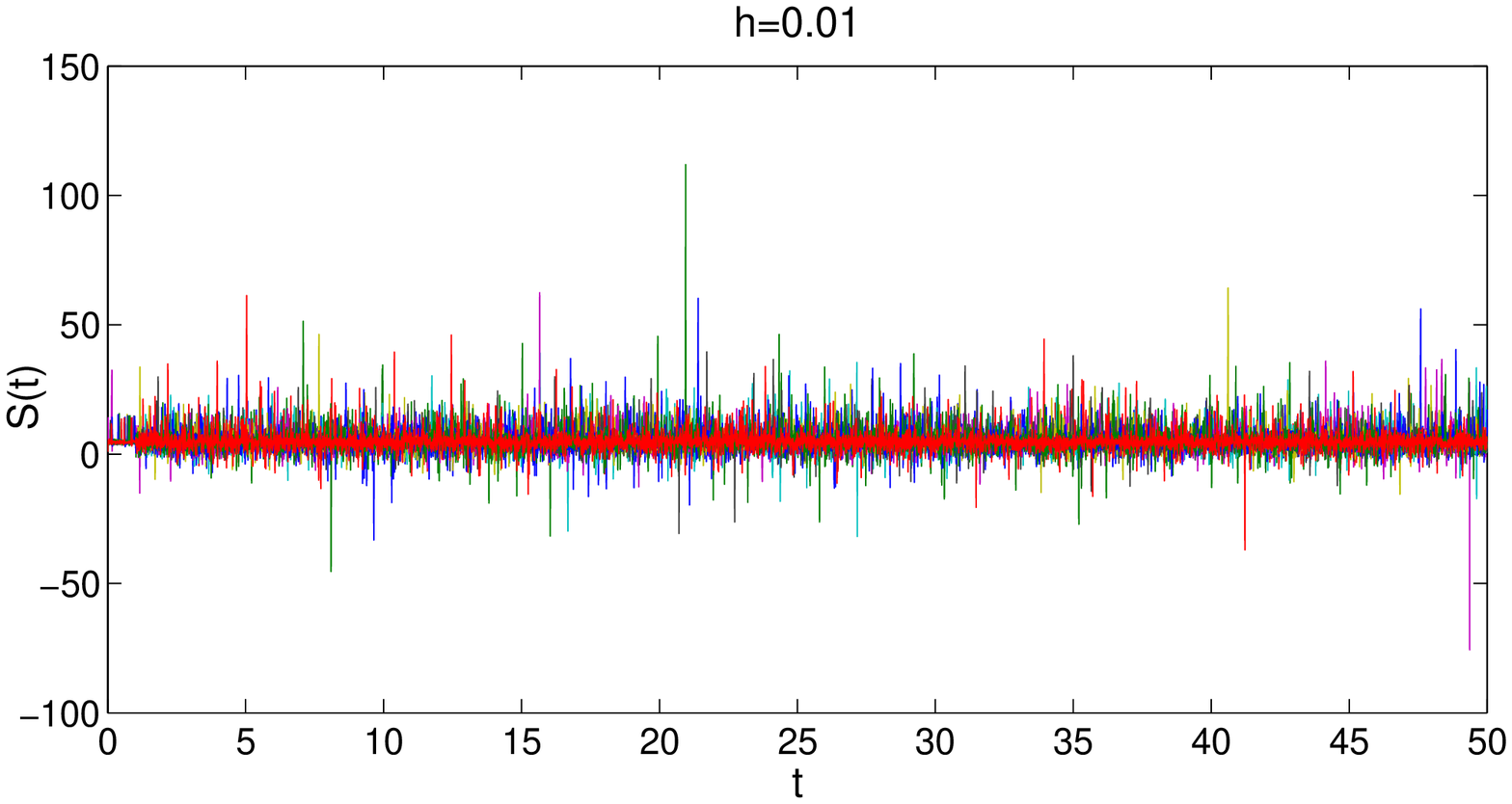}
\caption{Ten solution paths of
 Example 2, approximated by Euler method. }
\label{Fj4}
\end{center}
\end{figure}
\begin{figure}[h!]
\begin{center}
\includegraphics[width=12cm]{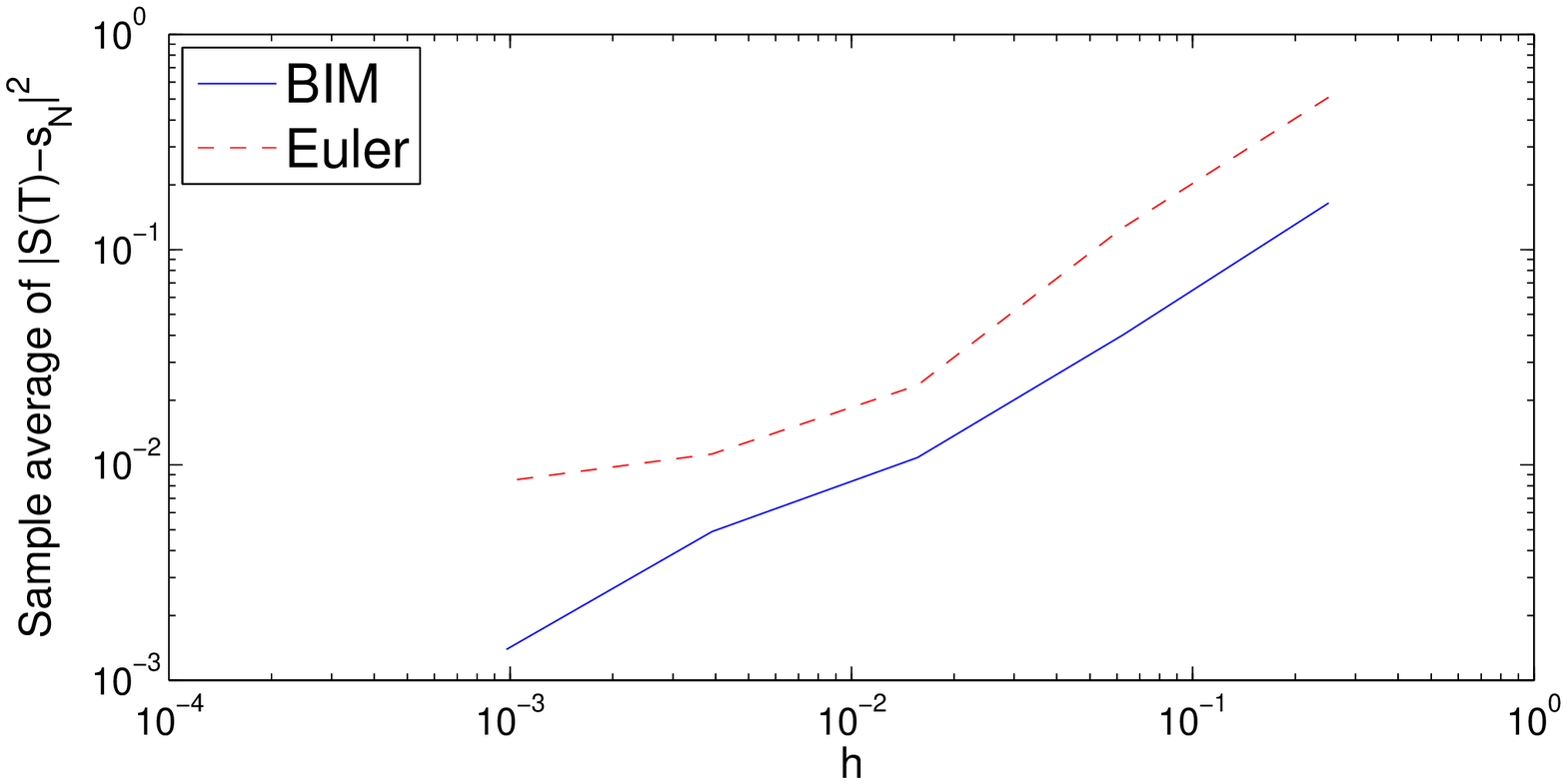}
\caption{Strong error of the  BIM   with   $C_0=10,\,C_2=1,$ $C_1=\sigma \frac{s_{n-m}^{\gamma}}{\sqrt{s_n}},$
  applied to Example 1.  }\label{Fj5}
\end{center}
\end{figure}
\begin{figure}[h!]
\begin{center}
\includegraphics[width=12cm]{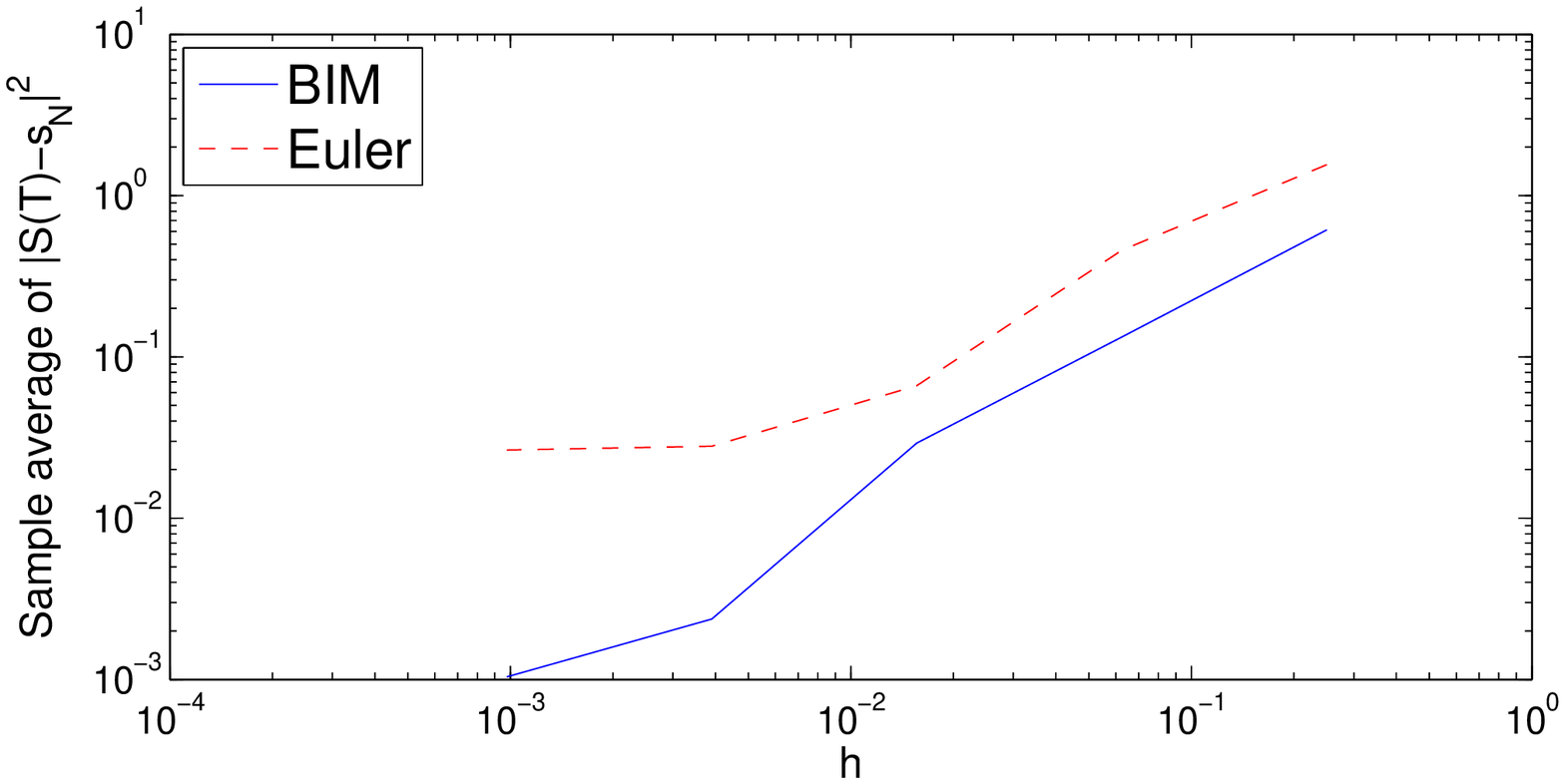}
\caption{Strong error of the  BIM   with   $C_0=200,\,C_2=5,$ $C_1=\sigma \frac{s_{n-m}^{\gamma}}{\sqrt{s_n}},$
  applied to Example 2.  }\label{Fj6}
\end{center}
\end{figure}
\begin{figure}[h!]
\begin{center}
\includegraphics[width=12cm]{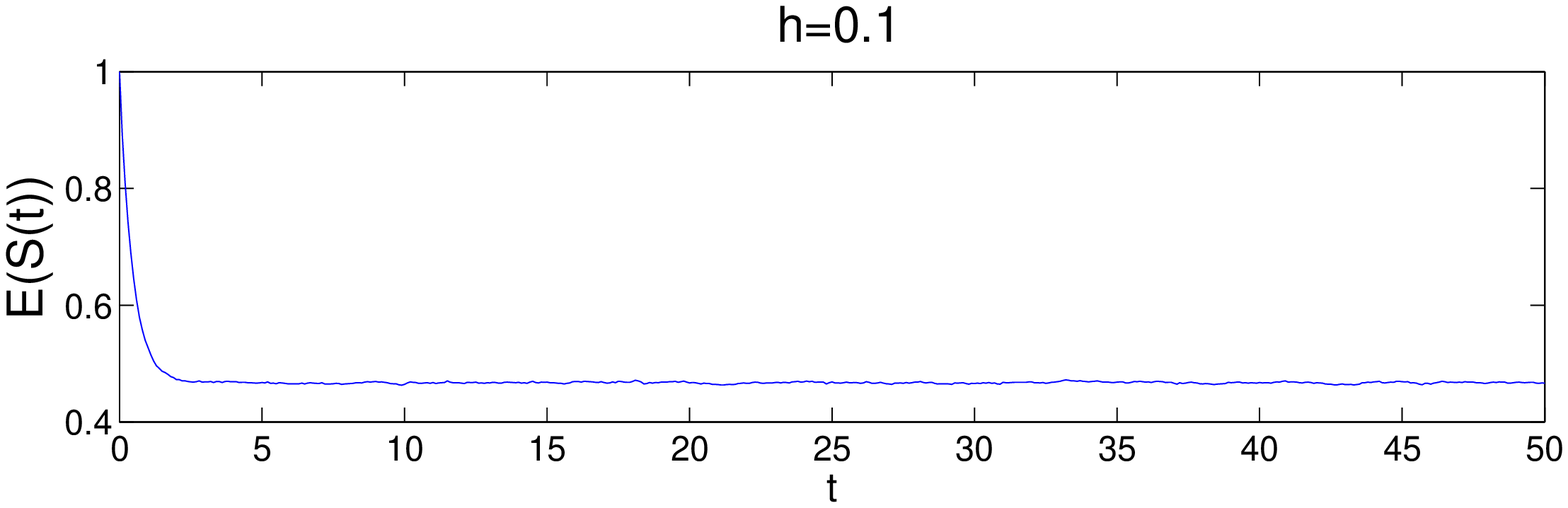}
\caption{$E(S(t))$ vs. $t$ of  the BIM  with $C_0=10,\,C_2=1$ and $C_1=\sigma
\frac{s_{n-m}^{\gamma}}{\sqrt{s_n}}$
and with step size $h=0.1,$ for
the Example 1.} \label{Fsj1}
\includegraphics[width=12cm]{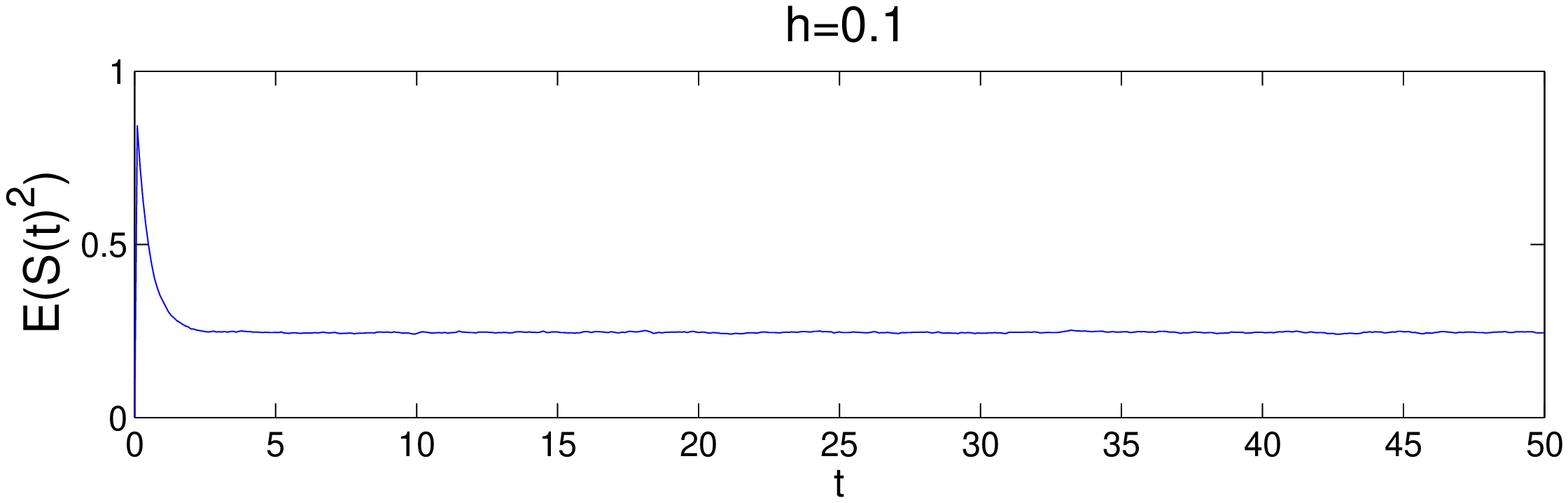}
\caption{$E(S(t)^2)$ vs. $t$ of  the BIM  with $C_0=10,\,C_2=1$ and $C_1=\sigma
\frac{s_{n-m}^{\gamma}}{\sqrt{s_n}}$
and with step size $h=0.1,$ for
the Example 1.} \label{Fsj3}
\includegraphics[width=12cm]{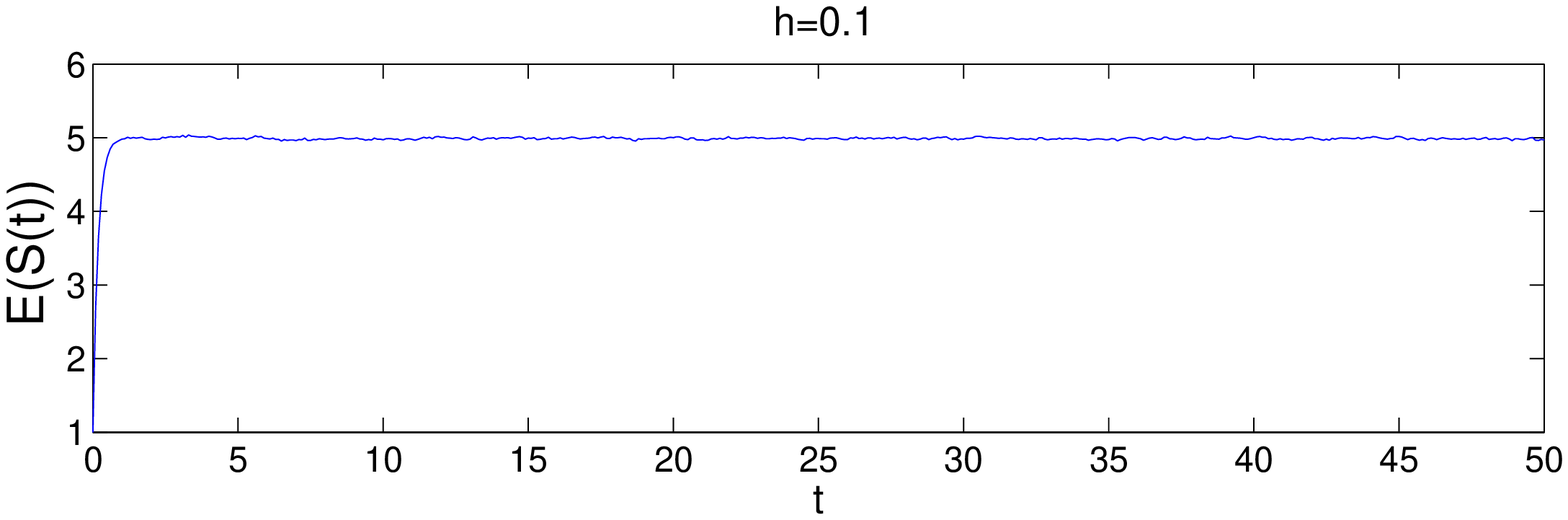}
\caption{$E(S(t))$ vs. $t$ of  the BIM  with $C_0=200,\,C_2=5$ and $C_1=\sigma
\frac{s_{n-m}^{\gamma}}{\sqrt{s_n}}$
and with step size $h=0.1,$ for
the Example 2.} \label{Fsj2}
\includegraphics[width=12cm]{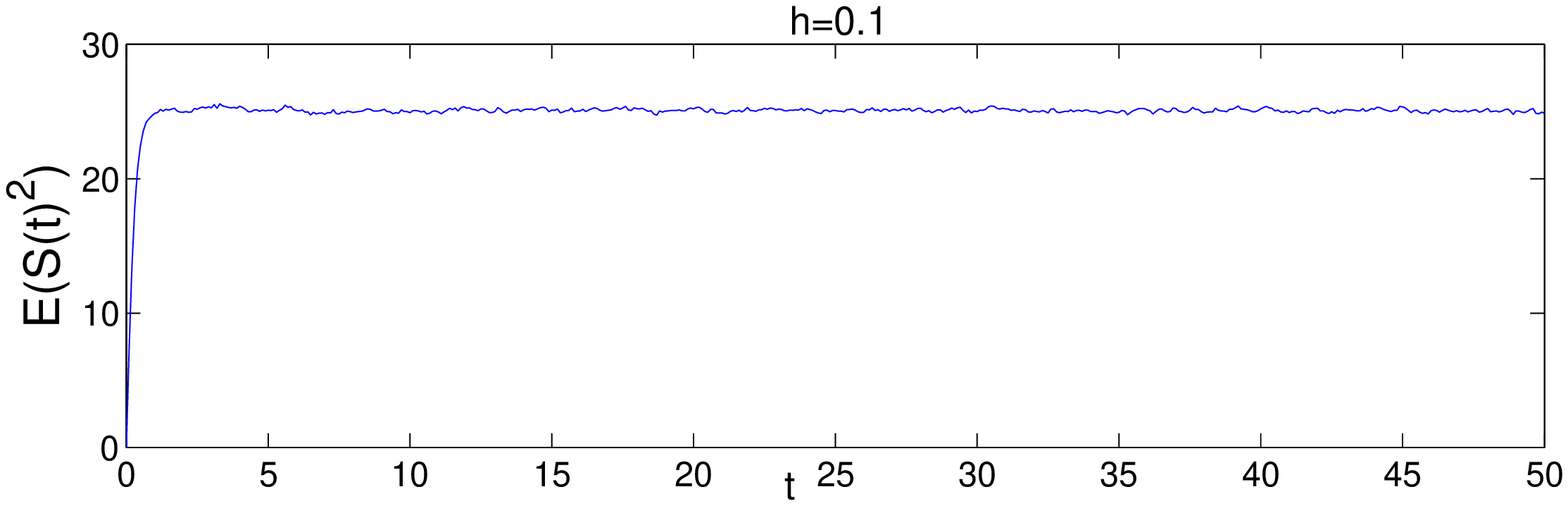}
\caption{$E(S(t)^2)$ vs. $t$ of  the BIM  with $C_0=200,\,C_2=5$ and $C_1=\sigma
\frac{s_{n-m}^{\gamma}}{\sqrt{s_n}}$
and with step size $h=0.1,$ for
the Example 2.} \label{Fsj4}
\end{center}
\end{figure}
\section{Conclusions}
In this work, we have demonstrated convergence and non-negativity properties of the
numerical solution obtained by the BIM for delay
 CIR model with jump.  First, we have chosen control functions of the BIM  such that this method can preserve
  non-negativity of  solution of the model. Then, we have studied the    moments boundedness and convergence
  of the solution of BIM by the determined control functions. Some numerical experiments have been included which illustrate the theoretical results
  obtained in this paper.

\end{document}